\documentclass[a4paper,10.5pt]{article}
\usepackage{amsmath,amssymb,amsthm}
\usepackage{cite}
\usepackage[colorlinks, linkcolor=black,citecolor=black]{hyperref}
\usepackage{bm}
\usepackage[section]{algorithm}
\usepackage{cases}
\usepackage{graphicx,subfigure}
\usepackage{authblk}

\usepackage{enumitem} 
\newtheorem{mytheorem}{Theorem}[section]
\newtheorem{mylemma}{Lemma}[section]
\newtheorem{mycorollary}{Corollary}[section]

\theoremstyle{definition}

\newtheorem{myremark}{Remark}[section]
\newtheorem{mydefinition}{Definition}[section]

\numberwithin{equation}{section}
\numberwithin{figure}{section}
\numberwithin{table}{section}

\allowdisplaybreaks
\newcommand{\dif}{\mathrm{\,d}}

\newcommand{\DS}{\mathbb{D}}
\newcommand{\NS}{\mathbb{N}} 
\newcommand{\RS}{\mathbb{R}} 
\newcommand{\XS}{\mathbb{X}}
\newcommand{\YS}{\mathbb{Y}}

\newcommand{\tran}[1]{{#1}^\mathrm{T}}
\newcommand{\diag}{\mathrm{diag}}
\newcommand{\ball}{\bm{\mathrm{B}}}
\newcommand{\I}{\bm{\mathrm{I}}}

\renewcommand{\vec}[1]{\bm{\mathrm{#1}}}
\newcommand{\vf}{\vec{f}}

\begin{document}

\title{Semilocal Convergence Analysis for Two-Step Newton Method
     under Generalized Lipschitz Conditions in Banach Spaces\thanks{
     This work was supported in part by the Fujian Province National Science Foundation of China (Grant 2016J05015).}}
\author{Yonghui Ling\footnote{Corresponding author.\\ \indent E-mail address: yhling@mnnu.edu.cn (Y. Ling).} }

\author{Juan Liang}

\affil{Department of  Mathematics, Minnan Normal University, Zhangzhou 363000, China}

\maketitle

\begin{abstract}
In the present paper, we consider the semilocal convergence
problems of the two-step Newton method for solving nonlinear operator
equation in Banach spaces. Under the assumption that the first derivative
of the operator satisfies a generalized Lipschitz condition, a new
semilocal convergence analysis for the two-step Newton method is presented.
The Q-cubic convergence is obtained by an additional condition.
This analysis also allows us to obtain three important spacial cases about
the convergence results based on the premises of Kantorovich, Smale and
Nesterov-Nemirovskii types. An application of our convergence results is to the approximation
of minimal positive solution for a nonsymmetric algebraic Riccati equation
arising from transport theory.
\\
\par
\noindent\textbf{Keywords:} Two-step Newton method;
    generalized Lipschitz conditions; semilocal convergence; algebraic Riccati equation
\par
{\noindent \bf AMS Subject Classifications:} 49M15, 47J25, 65J15
\end{abstract}

\section{Introduction}

In this paper, we aim to study the convergence of iterative methods for approximating
the solution of the nonlinear operator equation
\begin{equation}
\label{eq:F(x)=0}
F(x) = 0,
\end{equation}
where $F$ is a given Fr\'{e}chet differentiable nonlinear operator which maps from some open
convex subset $\DS$ in a Banach space $\XS$ to another Banach space $\YS$.
Newton's method is probably the most important and efficient iterative method known for solving such equation,
which proceeds as follows: for a given initial point $x_0 \in \DS$, construct iteratively a sequence such that
\begin{equation}
\label{it:NM}
x_{k+1} = x_k - F'(x_k)^{-1} F(x_k), \quad k = 0, 1, 2, \ldots.
\end{equation}

An important convergence result for Newton's method \eqref{it:NM} is the well-known Kantorovich theorem \cite{KantorvichA1982},
which gives a simple and clear criterion guaranteeing the existence of a solution of equation \eqref{eq:F(x)=0},
uniqueness of this solution in a prescribed ball and the quadratic convergence of Newton's method \eqref{it:NM}.
The assumptions used essentially focus on the nonsingularity of the first Fr\'{e}chet derivative of $F$ at initial point $x_0$
and the behavior of the Fr\'{e}chet derivative of $F$ on an appropriate metric ball of
the initial point $x_0$ contained in $\DS$ (such as that the second Fr\'{e}chet derivative of $F$ is bounded or
the first Fr\'{e}chet derivative of $F$ is Lipschitz continuous). There are a lot of works on the weakness
and/or extension of the assumptions made on the operator $F$ and its derivative,  see for example
\cite{DeuflhardH1979,GraggT1974,GutierrezH2000,EzquerroH2002,EzquerroHM2018,Huang1993,Argyros2001,ArgyrosK2015}.
For more studies and applications of the Kantorovich like theorem, one can see the recent survey paper
by Kelley in \cite{Kelley2018} for more details.

Another important convergence result concerning Newton's method \eqref{it:NM} is the famous Smale $\alpha$-theory (with analytic $F$),
presented in the report \cite{Smale1987} (see also \cite{Smale1986,Smale1997}),
where the notion of approximate zeros was proposed and the rules were established to determine if an initial point $x_0$ is an approximate zero.
Since then, this line of research has been extensively studied resulting many significant improvements
and extensions in several directions; see for example
\cite{Chen1994,DedieuK2002,DedieuPM2003,DedieuS2000a,DedieuS2000b,ShubS1996,WangH1990,WangL2001,LiN2016}
and references therein. In particular, to drop the analytic assumption, Wang and Han
in \cite{WangH1997a} (see also \cite{Wang1999})
introduced the weak $\gamma$-condition for nonlinear twice continuously differentiable operator $F$
between Banach spaces. The notion of the weak $\gamma$-condition was also used in
\cite{LiW2006} and \cite{LiN2013} to extend and improve the corresponding results in
\cite{DedieuPM2003} and \cite{Dedieu2000}, respectively. Recently, the main results
obtained in \cite{LiN2013} were improved further by the corresponding ones in \cite{LiN2018}.

Wang in \cite{Wang1999} introduced some generalized Lipschitz condition called Lipschitz condition with $L$-average,
under which Kantorovich like convergence criteria and Smale's $\alpha$-theory can be investigated together.
The generalized Lipschitz condition was also used to study the convergence for various iterative methods.
For example, Xu and Li in \cite{XuL2008} investigated the semilocal convergence of Gauss-Newton's method
for singular systems with constant rank derivatives under the hypothesis that the derivatives satisfy the generalized Lipschitz condition.
This notion was extended to study the convergence of Gauss-Newton's method for one kind of special
singular systems of equations in \cite{LiHW2010} and improved further in \cite{ArgyrosH2011b}.
Besides, Alvarez et al. in \cite{AlvarezBM2008} extended this notion to a Riemannian contest and
established a unified convergence result for Newton's method. 
Recently, Ferreira and Svaiter in \cite{FerreiraS2009} presented a new
convergence analysis for Kantorovich's theorem which makes clear, with respect to Newton's method, the
relationship of the majorizing function and the nonlinear operator under consideration.
More extensions of this idea are referred to \cite{FerreiraS2012,ferreira2015,BittencourtF2017}.

As is well known, there are several kinds of cubic generalization for Newton's method.
The most important two are the Euler method and the Halley method; see for example
\cite{CandelaM1990a,CandelaM1990b,HanW1997,EzquerroH2005,EzquerroH2009a,EzquerroH2009b,LingX2014} and references therein.
For the applications in the field of matrix functions,
the efficiency (when properly implemented) of these two methods have been shown
in \cite{Iannazzo2008,LingH2017} for computing matrix $p$th root and
in \cite{NakatsukasaBG2010,NakatsukasaF2016} for computing the
polar decomposition of a matrix.
Another more general family of the cubic extensions is the family of Euler-Halley methods
type methods in Banach spaces, which includes the Euler and the Halley method as its
special cases and has been studied extensively in \cite{GutierrezH1997,Han2001}.

For general nonlinear equation \eqref{eq:F(x)=0}, however, the preceding classical third-order
methods need to evaluate the second Fr\'{e}chet derivative which is very time consuming.
The order of convergence of the classical two-step Newton method has also three,
but without evaluating any second Fr\'{e}chet derivative. The two-step Newton method
with initial point $x_0$ is defined by
\begin{equation}
\label{it:TwoStepNM}
\left\{
\begin{aligned}
& y_k = x_k - F'(x_k)^{-1}F(x_k), \\
& x_{k+1} = y_k - F'(x_k)^{-1}F(y_k),
\end{aligned}\right.
\quad k = 0,1,2,\ldots.
\end{equation}
The results concerning semilocal convergence (including existence, uniqueness and convergence)
of this iterative method have been studied under the
assumptions of Newton-Kantorovich type. By applying the majorizing function technique used
in the previous work of Zabrejko and Nguen \cite{ZabrejkoN1987} on Newton’s method,
Appell et al. \cite{AppellDEZ1995} established the
semilocal convergence and error estimate under the assumption that the first Fr\'{e}chet derivative
satisfies the Lipschitz condition.
Amat et al. \cite{AmatBG2011} investigated the convergence behavior based fundamentally on a
generalization required to the second Fr\'{e}chet derivative of $F$.
Recently, to weak the conditions used in \cite{AmatBG2011},
Magre\~{n}\'{a}n Ruiz and Argyros in \cite{ArgyrosMR2014} presented new convergence
analysis (including semilocal and local convergence) under the hypotheses that the first Fr\'{e}chet
derivative of $F$ satisfies Lipschitz and Lipschitz-like conditions.

The motivation of this paper is based on the following  two aspects of applications for
the two-step Newton method \eqref{it:TwoStepNM}. The first one stems from \cite{LinB2008, LingX2017}
for solving a nonsymmetric algebraic Riccati equation arising in transport theory,
where the monotone convergence guaranteeing the implementation of the two-step Newton based algorithm was showed.
The second one stems from \cite{ChenWS2018} for the inverse eigenvalue
problems (IEP), where the two-step Newton method \eqref{it:TwoStepNM} was used to present effective
algorithms for solving the solution of the IEP.

The goal of this paper is to establish a general semilocal convergence result for the two-step
Newton method \eqref{it:TwoStepNM} under the assumption that the first derivative of $F$ satisfies
some generalized Lipschitz condition,
which was introduced by Wang in \cite{Wang1999} for Newton's method \eqref{it:NM}.
When the unified convergence criterion given in \cite{Wang1999} is satisfied, we show that the
existence and uniqueness of a solution, together with the Q-superquadratic convergence of the two-step
Newton method \eqref{it:TwoStepNM}.
In our convergence analysis, the relationships between the majorizing function and the nonlinear operator
are made clear. Moreover, we show also that the two-step Newton method \eqref{it:TwoStepNM}
is Q-cubically convergent under a slightly stronger condition.
In particular, this convergence analysis allows us to obtain some important special cases,
which include Kantorovich-type convergence result under the Lipschitz condition, Smale-type convergence
results under the $\gamma$-condition and the convergence result for self-concordant functions
under the Nesterov-Nemirovskii condition. We also adapt our convergence result to
compute the approximation of minimal positive solution
of a nonsymmetric algebraic Riccati equation arising from transport theory. Numerical experiments
confirm our convergence result.

The rest of this paper is organized as follows.
In Section 2, we introduce some preliminary notions and properties of the majorizing function and majorizing sequences.
The main result about the semilocal convergence and error estimate are stated in Section 3.
In Section 4 we provide the convergence analysis for the main result.
We extend our convergence results in Section 5 to compute the minimal positive solution
of a nonsymmetric algebraic Riccati equation arising from transport theory.
We conclude with some final remarks in Section 6.

\section{Preliminaries}

Let $\XS$ and $\YS$ be Banach spaces. For $x \in \XS$ and a positive number $r$,
throughout the whole paper, we use $\ball(x,r)$ to stand for the open ball with radius $r$
and center $x$, and $\overline{\ball(x,r)}$ denote its closure. Moreover, $\I$ denotes the identity operator.

We assume that
$L(\cdot)$ is a positive nondecreasing function on $[0,R)$, where $R > 0$ satisfies
\begin{equation}
\label{cons:R}
\frac{1}{R} \int_0^R L(u)(R - u) \dif{u} = 1.
\end{equation}
Let $\beta > 0$.
The majorizing function $h: [0, R] \to \RS$ is defined by
\begin{equation}
\label{fun:h(t)}
h(t) = \beta - t + \int_0^t L(u)(t - u) \dif{u}, \quad t \in [0,R].
\end{equation}
This majorizing function was introduced by Wang in \cite{Wang1999} to study
the semilocal convergence of Newton's method \eqref{it:NM}. Clearly, we have
$$
h'(t) = - 1 + \int_0^t L(u) \dif{u}, \quad t \in [0,R)
$$
and
$$
h''(t) = L(t) > 0 \quad \text{for a.e. } t \in [0,R).
$$
Then, we obtain that
$$
\int_s^t L(u) \dif{u} = h'(t) - h'(s) \quad \text{for any } s, t \in [0, R) \text{ with } s < t.
$$
This simple equality will be frequently applied to our convergence analysis for the two-step Newton method \eqref{it:TwoStepNM}.
Assume that $r_0$ satisfies
\begin{equation}
\label{cons:r0}
\int_0^{r_0} L(u) \dif{u} = 1.
\end{equation}
It follows that $h(t)$ is strictly convex, $h'(t)$ increasing, convex and $-1 \leq h'(t) < 0$ for any $t \in [0,r_0)$.

The following lemma gives some properties about elementary convex analysis and will also be frequently used
in our convergence analysis for the two-step Newton method \eqref{it:TwoStepNM}.

\begin{mylemma}
\label{lem:ConvexProperty}
Let $R > 0$. If $f: (0,R) \to \RS$ is continuously differentiable and convex, then
\begin{itemize}[font=\upshape]
\item[(i)]
$(1 - \theta) f'(\theta t) \leq \cfrac{f(t) - f(\theta t)}{t} \leq (1 - \theta) f'(t)$ for all $t \in (0, R)$ and $0 \leq \theta \leq 1$.
\item[(ii)]
$\cfrac{f(u) - f(\theta u)}{u} \leq \cfrac{f(v) - f(\theta v)}{v}$ for all $u, v \in (0,R), u < v$ and $0 \leq \theta \leq 1$.
\end{itemize}
In particular, if $f$ is strictly convex, then the above inequalities are strict.
\end{mylemma}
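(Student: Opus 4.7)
The plan is to reduce both parts to integral representations of $f(t) - f(\theta t)$ via the fundamental theorem of calculus, and then exploit the fact that convexity of $f$ makes $f'$ nondecreasing on $(0,R)$.

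For part (i), I would write
$$f(t) - f(\theta t) = \int_{\theta t}^{t} f'(s) \dif{s}.$$
Since $f'$ is nondecreasing, for $s \in [\theta t, t]$ we have $f'(\theta t) \leq f'(s) \leq f'(t)$; integrating this two-sided bound over an interval of length $(1-\theta) t$ and dividing by $t > 0$ yields
$$(1-\theta) f'(\theta t) \leq \frac{f(t) - f(\theta t)}{t} \leq (1-\theta) f'(t),$$
which is exactly (i).

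For part (ii), I would rescale via the substitution $s = ru$ with $r \in [\theta, 1]$ to obtain the scale-invariant form
$$\frac{f(u) - f(\theta u)}{u} = \int_{\theta}^{1} f'(ru) \dif{r},$$
and analogously for $v$. Since $0 < u < v$ implies $ru \leq rv$ for every $r \in [\theta,1]$, monotonicity of $f'$ gives $f'(ru) \leq f'(rv)$ pointwise in $r$, and integration in $r$ preserves this inequality, producing (ii).

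For the strict case, strict convexity of $f$ makes $f'$ strictly increasing, so the pointwise inequalities above become strict on any non-degenerate subinterval (that is, whenever $\theta < 1$ for (i), or $u < v$ for (ii)), and strict inequality between continuous integrands integrates to strict inequality in the conclusion. I do not anticipate any real obstacle, as this is a routine exercise in one-dimensional convex analysis; the only minor technicality is the boundary case $\theta = 0$, which is outside the stated open domain $(0,R)$, but the identities extend by continuity, and in practice the lemma will be applied to the majorizing function $h$ defined on the closed interval $[0,R]$.
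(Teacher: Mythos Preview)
Your proposal is correct. The paper does not actually supply a proof of this lemma: it simply refers the reader to Theorem~4.1.1 and Remark~4.1.2 in Hiriart-Urruty and Lemar\'echal's book. Your argument, by contrast, is a self-contained elementary derivation from the integral representation $f(t)-f(\theta t)=\int_{\theta t}^{t} f'(s)\dif s$ and the monotonicity of $f'$ implied by convexity; the rescaling $s=ru$ that makes part~(ii) transparent is exactly the right move. This buys independence from an external reference at essentially no cost, and is the standard route one would take for such a result. Your remark on the boundary case $\theta=0$ is apt---the lemma as stated has $f$ defined only on the open interval $(0,R)$, so $f(0)$ is formally undefined---but, as you note, in every application within the paper the relevant function is the majorant $h$, which is defined on the closed interval $[0,R]$, so the issue does not arise in practice.
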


\begin{proof}
See Theorem 4.1.1 and Remark 4.1.2 in the book of Hiriart-Hrruty and Lemar\'{e}chal
\cite[p.21]{HiriartL1993}.
\end{proof}

Define
\begin{equation}
\label{cons:b}
b := \int_0^{r_0} L(u) u \dif{u}.
\end{equation}
The following lemma is taken from \cite[Lemma 1.2]{Wang1999} which gives some basic properties
for the majorizing function $h$.

\begin{mylemma}[\cite{Wang1999}]
\label{lem:h(t)_property}
If $0 < \beta < b$, then $h$ is decreasing monotonically in $[0,r_0]$, while it is increasing
monotonically in $[r_0, R]$ and
\begin{equation}
\label{ineq:h_beta_property1}
h(\beta) > 0,  \quad h(r_0) = \beta - b < 0, \quad h(R) = \beta > 0.
\end{equation}
Moreover, $h$ has a unique zero in each interval, denoted by $t^*$ and $t^{**}$. They satisfy
\begin{equation}
\label{ineq:h_beta_property2}
\beta < t^* < \frac{r_0}{b} \cdot \beta < r_0 < t^{**} < R.
\end{equation}
\end{mylemma}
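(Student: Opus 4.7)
The plan is to derive the claims in the order stated: first the sign of $h'$ and hence the monotonicity of $h$, then the three prescribed values of $h$, then existence and uniqueness of the zeros via the intermediate value theorem, and finally the sandwiching estimates for $t^*$ and $t^{**}$.

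First, differentiating under the integral gives $h'(t) = -1 + \int_0^t L(u)\dif{u}$. Since $L$ is positive, $h'$ is strictly increasing on $[0,R)$ with $h'(0) = -1$, while the defining relation \eqref{cons:r0} gives $h'(r_0) = 0$. Hence $h' < 0$ on $[0,r_0)$ and $h' > 0$ on $(r_0,R)$, so $h$ is strictly decreasing on $[0,r_0]$ and strictly increasing on $[r_0,R]$. Second, the three values in \eqref{ineq:h_beta_property1} follow by substituting the defining integrals into \eqref{fun:h(t)}: \eqref{cons:R} gives $\int_0^R L(u)(R-u)\dif{u} = R$, whence $h(R) = \beta$; combining \eqref{cons:r0} with \eqref{cons:b} yields $h(r_0) = \beta - r_0 + r_0\cdot 1 - b = \beta - b$, which is negative by the assumption $\beta < b$; and $h(\beta) = \int_0^\beta L(u)(\beta-u)\dif{u} > 0$ because $L > 0$ on $(0,\beta)$ and $\beta < b < r_0$ (the last inequality uses $b = \int_0^{r_0} L(u)\,u\dif{u} < r_0 \int_0^{r_0} L(u)\dif{u} = r_0$). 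Combining the monotonicity above with continuity and the intermediate value theorem produces unique roots $t^* \in (0,r_0)$ and $t^{**} \in (r_0,R)$.

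The estimates in \eqref{ineq:h_beta_property2} then follow by comparing $h$ at well-chosen test points. The lower bound $\beta < t^*$ is immediate from $h(\beta) > 0 = h(t^*)$ and the strict decrease of $h$ on $[0,r_0]$. For the middle bound I would invoke strict convexity of $h$, which holds because $h''(t) = L(t) > 0$. With the weight $\theta := \beta/b \in (0,1)$, writing $\frac{r_0}{b}\beta = \theta\, r_0 + (1-\theta)\cdot 0$ and applying Jensen's inequality strictly gives
\[
h\!\left(\tfrac{r_0}{b}\beta\right) < \theta\, h(r_0) + (1-\theta)\, h(0) = \tfrac{\beta}{b}(\beta - b) + \left(1 - \tfrac{\beta}{b}\right)\beta = 0,
\]
and since $\frac{r_0}{b}\beta \in (0,r_0)$ and $h$ is strictly decreasing there, this forces $t^* < \frac{r_0}{b}\beta$. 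Finally $r_0 < t^{**}$ comes from $h(r_0) < 0 = h(t^{**})$ and $t^{**} < R$ from $h(R) = \beta > 0 = h(t^{**})$, both using strict increase of $h$ on $[r_0,R]$.

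The only nonroutine step I foresee is recognising the right convex combination in the middle bound: once one spots that $\theta = \beta/b$ is the weight that makes the right-hand side of Jensen's inequality cancel to zero, the entire estimate drops out in one line. Everything else is a direct substitution using the three defining relations \eqref{cons:R}, \eqref{cons:r0}, \eqref{cons:b}, together with the elementary fact that a strictly convex, continuously differentiable function changes sign at most once on each interval of monotonicity.
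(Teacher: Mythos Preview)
Your proof is correct. The paper does not supply its own argument for this lemma; it merely cites it as Lemma~1.2 of \cite{Wang1999}, so there is no in-paper proof to compare against. Your derivation of the monotonicity from the sign of $h'$, the direct evaluation of $h$ at $\beta$, $r_0$, $R$ via the defining integrals, and the intermediate-value argument for the two roots are all standard and sound. The one step that requires an idea---the bound $t^* < \tfrac{r_0}{b}\beta$---you handle cleanly: writing $\tfrac{r_0}{b}\beta$ as the convex combination $\tfrac{\beta}{b}\cdot r_0 + (1-\tfrac{\beta}{b})\cdot 0$ and applying strict convexity of $h$ to get $h(\tfrac{r_0}{b}\beta) < 0$ is exactly the right observation, and the cancellation you note in the right-hand side is correct. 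A minor stylistic remark: since the paper only asserts $h''(t)=L(t)>0$ for a.e.\ $t$, it is slightly safer to phrase strict convexity as ``$h'$ is strictly increasing because $L>0$'' rather than pointwise positivity of $h''$, but this does not affect the validity of your Jensen step.
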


Let $\{s_k\}$ and $\{t_k\}$ denote the corresponding sequences generated by the two-step
Newton method for the majorizing function $h$ with the initial point $t_0 = 0$, that is,
\begin{equation}
\label{it:TwoStepNM_Scalar}
\left\{
\begin{aligned}
& s_k = t_k - \frac{h(t_k)}{h'(t_k)}, \\
& t_{k+1} = s_k - \frac{h(s_k)}{h'(t_k)},
\end{aligned}
\quad k = 0,1,2,\ldots.
\right.
\end{equation}

The lemma below describes the convergence property of the sequences
$\{s_k\}$ and $\{t_k\}$, which is crucial for the semilocal convergence
analysis of the two-step Newton method \eqref{it:TwoStepNM}.

\begin{mylemma}
\label{lem:TwoStepNM_Scalar_Conv}
Let $\{s_k\}$ and $\{t_k\}$ be the sequences generated by \eqref{it:TwoStepNM_Scalar}.
Suppose that $0 < \beta \leq b$. Then we have
\begin{equation}
\label{ineq:tk<sk<tk+1}
0 \leq t_k < s_k < t_{k+1} < t^* \quad \text{for all } k \geq 0.
\end{equation}
Moreover, $\{s_k\}$ and $\{t_k\}$ converge increasingly to the same point $t^*$.
\end{mylemma}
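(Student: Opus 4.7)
The plan is to prove \eqref{ineq:tk<sk<tk+1} by induction on $k$ and then derive the convergence from monotonicity together with a squeeze. Throughout I will use the following structural facts, all immediate from $h''(t)=L(t)>0$ (a.e.) and Lemma \ref{lem:h(t)_property}: $h$ is strictly convex on $[0,R]$; $h'$ is strictly increasing and strictly negative on $[0,r_0)$; and $h$ is strictly decreasing on $[0,t^*]$ with $h(t^*)=0$.

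For the base case $k=0$, $h'(0)=-1$ and $h(0)=\beta$ give $s_0=\beta$, and \eqref{ineq:h_beta_property2} yields $0=t_0<s_0<t^*$; since $h(s_0)>0$, the definition of $t_1$ in \eqref{it:TwoStepNM_Scalar} immediately gives $t_1>s_0$. For the inductive step, I assume $t_k<t^*$ and show $t_k<s_k<t_{k+1}<t^*$. The inequality $t_k<s_k$ is clear from $h(t_k)>0$ and $h'(t_k)<0$. To obtain $s_k<t^*$, I apply the tangent inequality of convexity, $h(t^*)\ge h(t_k)+h'(t_k)(t^*-t_k)$; rearranging and dividing by $h'(t_k)<0$ (strict convexity making the inequality strict) gives $s_k=t_k-h(t_k)/h'(t_k)<t^*$. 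Hence $h(s_k)>0$ and the same sign considerations force $t_{k+1}>s_k$.

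The delicate step is $t_{k+1}<t^*$, because the second substep of \eqref{it:TwoStepNM_Scalar} uses $h'(t_k)$ in place of $h'(s_k)$. My idea is to compare the two-step iterate with the pure Newton step $\tilde t_{k+1}:=s_k-h(s_k)/h'(s_k)$ taken from $s_k$. Since $h'$ is strictly increasing and $t_k<s_k<r_0$, one has $h'(t_k)<h'(s_k)<0$, so $|h'(t_k)|>|h'(s_k)|$ and
\begin{equation*}
t_{k+1}=s_k+\frac{h(s_k)}{|h'(t_k)|}<s_k+\frac{h(s_k)}{|h'(s_k)|}=\tilde t_{k+1}.
\end{equation*}
Applying the tangent inequality at $s_k$ exactly as in the previous paragraph gives $\tilde t_{k+1}<t^*$, and by transitivity $t_{k+1}<t^*$, closing the induction.

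Finally, \eqref{ineq:tk<sk<tk+1} shows that $\{t_k\}$ is strictly increasing and bounded above by $t^*$, so it converges to some $t_\infty\le t^*$; the squeeze $t_k<s_k<t_{k+1}$ forces $\{s_k\}$ to share the same limit $t_\infty$. Passing to the limit in $s_k-t_k=-h(t_k)/h'(t_k)$, using continuity of $h$ and $h'$ on $[0,r_0)$, yields $h(t_\infty)=0$ (the degenerate boundary case $t_\infty=r_0$ already coincides with $t^*$ when $\beta=b$), so $t_\infty=t^*$ since $t^*$ is the unique zero of $h$ on $[0,r_0]$. I expect the only genuine obstacle to be the bound $t_{k+1}<t^*$; the comparison with the pure Newton step cleanly isolates the perturbation caused by freezing the derivative at $t_k$, and everything else reduces to sign-chasing with $h$ strictly convex and monotone.
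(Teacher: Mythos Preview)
Your proof is correct. The overall architecture matches the paper's---induction for \eqref{ineq:tk<sk<tk+1} followed by a monotone convergence argument---but the individual steps are handled differently.

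For the base case, the paper computes $t_1=\beta+h(\beta)$ explicitly and introduces the auxiliary function $\Phi(t)=t+h(t)$, showing $\Phi$ is increasing to conclude $t_1<t^*$; you instead absorb $k=0$ into the inductive step, which is more economical. For $s_k<t^*$, the paper argues via monotonicity of the Newton map $N(t)=t-h(t)/h'(t)$ (differentiating to get $N'(t)=h(t)h''(t)/h'(t)^2>0$), whereas you use the tangent-line inequality of strict convexity directly; your version avoids the differentiation and uses only first-order convexity. The most substantive difference is the bound $t_{k+1}<t^*$: the paper strings together the secant-type inequality $h'(t_k)<(h(t^*)-h(s_k))/(t^*-s_k)<0$ from Lemma~\ref{lem:ConvexProperty} and manipulates it algebraically, while your comparison with the pure Newton step $\tilde t_{k+1}=s_k-h(s_k)/h'(s_k)$ splits the estimate into two transparent pieces (the frozen-derivative step is no larger than the true Newton step because $|h'(t_k)|>|h'(s_k)|$, and the true Newton step stays below $t^*$ by the same tangent argument). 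Your decomposition makes the role of the derivative freezing explicit and is arguably cleaner; the paper's route stays closer to the convexity lemma that is reused throughout the later analysis. Your handling of the limit, including the boundary case $\beta=b$ where $t^*=r_0$ and $h'(t^*)=0$, is also sound.
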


\begin{proof}
To show that \eqref{ineq:tk<sk<tk+1} holds for the case $k=0$, we note that $0 = t_0 < s_0 = \beta$
and that
$$
t_1 = s_0 - \frac{h(s_0)}{h'(t_0)} = \beta+ h(\beta).
$$
By \eqref{ineq:h_beta_property1}, we have $t_1 > \beta = s_0$. It remains to show that
$t_1 < t^*$ for the case $k=0$. To this end, we define a real function $\Phi(t)$ in $(0, r_0)$ by
$$
\Phi(t) := t + h(t), \quad t \in (0, r_0).
$$
Then
$$
\Phi'(t) = 1 + h'(t) = \int_0^t L(u)\dif{u} > 0, \quad t \in (0, r_0).
$$
That is, $\Phi(t)$ is increasing monotonically in $(0, r_0)$. It follows from \eqref{ineq:h_beta_property2} that
$\Phi(\beta) < \Phi(t^*)$, which implies that $t_1 < t^*$. Hence \eqref{ineq:tk<sk<tk+1} holds for the case $k=0$.

Now, we assume that
$$
0 \leq t_{k-1} < s_{k-1} < t_k < t^* \quad \text{for some } k \geq 1.
$$
By Lemma \ref{lem:h(t)_property}, we have $h(t) \geq 0$ for each $t \in [0,t^*]$
and $h(t_k)/h'(t_k) < 0$. The later one means that $s_k > t_k$. Define
$$
N(t) := t - \frac{h(t)}{h'(t)}, \quad t \in [0,t^*].
$$
Then
$$
N'(t) = \frac{h(t) h''(t)}{h'(t)^2} > 0 \quad \text{for a.e. } t \in [0,t^*],
$$
which implies that $N(t)$ is increasing monotonically on $[0,t^*]$. Hence, we have
$$
s_k = t_k - \frac{h(t_k)}{h'(t_k)} < t^* - \frac{h(t^*)}{h'(t^*)} = t^*.
$$
It follows that $h(s_k)/h'(t_k) < 0$ and so $t_{k+1} > s_k$.

Moreover, since $h$ is strictly convex in $[0,r_0)$ and $h'$ increasing monotonically on $[0,t^*]$,
one has from Lemma \ref{lem:ConvexProperty} that
$$
h'(t_k) < h'(s_k) < \frac{h(t^*) - h(s_k)}{t^* - s_k} < 0,
$$
which gives
$$
\frac{h(t^*) - h(s_k)}{h'(t_k)} < t^* - s_k.
$$
Note that $h'(t_k) < h'(t^*)$, we have
$$
\frac{h(t^*)}{h'(t^*)} - \frac{h(s_k)}{h'(t_k)}
    < \frac{h(t^*)}{h'(t_k)} - \frac{h(s_k)}{h'(t_k)}
    < t^* - s_k.
$$
This implies that
$$
t_{k+1} = s_k - \frac{h(s_k)}{h'(t_k)} < t^* - \frac{h(t^*)}{h'(t^*)} = t^*.
$$
Therefore, \eqref{ineq:tk<sk<tk+1} holds for all $k \geq 0$ by mathematical induction. The inequalities in
\eqref{ineq:tk<sk<tk+1} imply that $\{s_k\}$ and $\{t_k\}$ converge increasingly to some same point,
say $\zeta$. Clearly, $\zeta \in [0,t^*]$ and $\zeta$ is a zero of $h$ on $[0,t^*]$. Noting that
$t^*$ is the unique zero of $h$ in $[0,r_0)$ by Lemma \ref{lem:h(t)_property},
one has that $\zeta = t^*$. The proof is complete.
\end{proof}

we conclude this section with the notions of generalized Lipschitz condition and Q-order of convergence.

\begin{mydefinition}
\label{def:LAverageLipschitzCond}
Let $x_0 \in \DS$ be such that $F'(x_0)^{-1}$ is nonsingular and $r > 0$ such that $\ball(x_0, r) \subseteq \DS$.
Then, $F'$ is said to satisfy the $L$-average Lipschitz condition on $\ball(x_0, r)$ if, for any
$x, y \in \ball(x_0, r)$ with $\|x - x_0\| + \|y - x\| < r$,
\begin{equation}
\label{cond:LAverageLipCond}
\|F'(x_0)^{-1}[F'(y) - F'(x)]\| \leq \int_{\|x - x_0\|}^{\|x - x_0\| + \|y - x\|} L(u) \dif{u}.
\end{equation}
\end{mydefinition}

The preceding generalized Lipschitz condition was first introduced by
Wang in \cite{Wang1999} where the terminology
of ``the center Lipschitz condition in the inscribed sphere with $L$-average'' was used.
Subsequently, to study the convergence behavior of Gauss-Newton, some modified versions
were introduced by Li and Ng in \cite{LiN2007} for convex composite optimization
and Li et al. in \cite{LiHW2010} for singular systems of equations.

\begin{mydefinition}
\label{def:Q_Order_Conv}
Let sequence $\{x_k\} \subset \XS$. We say that
$\{x_k\}$ converges to $x^*$ with Q-superquadratic if,
for any $c > 0$, there exists a constant $N_c \geq 0$ such that
$\|x_{k+1} - x^*\| \leq c \|x_k - x^*\|^2$ holds for all $k \geq N_c$, or equivalently
$$
\lim_{k \to \infty} \frac{\|x_{k+1} - x^*\|}{\|x_k - x^*\|^2} = 0.
$$
In addition, we say that $\{x_k\}$ converges to $x^*$ with Q-cubic if there exist
two constants $c \geq 0$ and $N_c \geq 0$ such that
$\|x_{k+1} - x^*\| \leq c \|x_k - x^*\|^3$ holds for all $k \geq N_c$, or equivalently
$$
\limsup_{k \to \infty} \frac{\|x_{k+1} - x^*\|}{\|x_k - x^*\|^3} < \infty.
$$
\end{mydefinition}

Q-order of convergence is well-known concept that measure the speed of convergence of sequences.
One can see \cite{Potra1989,Jay2001} for more properties on this notion.

\section{The main theorem and corollaries}

In this section, we present the main semilocal convergence result of this paper
for the two-step Newton method \eqref{it:TwoStepNM} under $L$-average Lipschitz
condition \eqref{cond:LAverageLipCond} in Banach spaces. Then, we obtain three important
special cases from this main result. They include the Kantorovich-type convergence result
under the Lipschitz condition, Smale-type convergence
result for analytical operators and the convergence result
for self-concordant functions under the Nesterov-Nemirovskii condition.

Let $x_0 \in \DS$ be the initial point such that the inverse $F'(x_0)^{-1}$ exists and let
$\ball(x_0, r_0) \subset \DS$, where $r_0$ satisfies \eqref{cons:r0}. Set
\begin{equation}
\label{cons:beta}
\beta := \|F'(x_0)^{-1}F(x_0)\|.
\end{equation}
Recall that $b$ is given by \eqref{cons:b}, $t^*$ and $t^{**}$ are the unique zeros of
the majorizing function $h$ (see \eqref{fun:h(t)}) in $[0, r_0]$ and $[r_0, R]$, respectively,
where $R$ satisfies \eqref{cons:R}. Recall also that $\{t_k\}$ is the sequence
generated by \eqref{it:TwoStepNM_Scalar}.

\begin{mytheorem}
\label{th:SemilocalConv}
Let $F: \DS \subset \XS \to \YS$ be a continuously Fr\'{e}chet differentiable nonlinear operator,
$\DS$ open and convex. Assume that there exists an initial point $x_0 \in \DS$ such that
$F'(x_0)^{-1}$ exists and that $F'$ satisfies the $L$-average Lipschitz condition \eqref{cond:LAverageLipCond}
on $\ball(x_0, t^*)$.
Let $\{x_k\}$ be the sequence generated by the two-step Newton method \eqref{it:TwoStepNM}
with initial point $x_0$. If $0 < \beta \leq b$, then $\{x_k\}$ is well-defined and converges
Q-superquadratically to a solution
$x^* \in \overline{\ball(x_0, t^*)}$ of \eqref{eq:F(x)=0}, and this solution $x^*$
is unique in $\overline{\ball(x_0, r)}$, where $t^* \leq r < t^{**}$. Moreover, if
\begin{equation}
\label{cond:CubicConv_Cond}
2 + \frac{t^*h''(t^*)}{h'(t^*)} > 0 \iff 2 - \frac{t^*L(t^*)}{1 - \int_0^{t^*} L(u) \dif{u}} > 0,
\end{equation}
then the order of convergence is cubic at least and we have the following error bounds
\begin{equation}
\label{estimate:norm_x*-xk+1_LAverageLipCond}
\|x^* - x_{k+1}\|
    \leq \frac{1}{2}H_*^2
        \cdot \frac{2 - t^*H_*}{2 + t^*H_*}
        \cdot \|x^* - x_k\|^3, \quad k \geq 0,
\end{equation}
where $H_* \triangleq h''(t^*)/h'(t^*)$.
\end{mytheorem}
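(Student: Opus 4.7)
The plan is to apply the classical Kantorovich-style majorization technique: I will show that the scalar sequences $\{t_k\}$ and $\{s_k\}$ from \eqref{it:TwoStepNM_Scalar} majorize the Banach-space iterates $\{x_k\}$ and $\{y_k\}$, so that the scalar convergence provided by Lemma~\ref{lem:TwoStepNM_Scalar_Conv} lifts to convergence of $\{x_k\}$ in $\overline{\ball(x_0, t^*)}$. The bound $\|F'(x_0)^{-1}F(x_k)\| \leq h(t_k) \to 0$, combined with continuity of $F$, then forces the limit $x^*$ to satisfy $F(x^*) = 0$.

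The main inductive claim is that, for every $k \geq 0$: (a) $\|x_k - x_0\| \leq t_k$, $F'(x_k)^{-1}$ exists and $\|F'(x_k)^{-1}F'(x_0)\| \leq -1/h'(t_k)$; (b) $\|F'(x_0)^{-1}F(x_k)\| \leq h(t_k)$, which together with (a) yields $\|y_k - x_k\| \leq s_k - t_k$; and (c) $\|F'(x_0)^{-1}F(y_k)\| \leq h(s_k)$, which together with (a) yields $\|x_{k+1} - y_k\| \leq t_{k+1} - s_k$, so that $\|x_{k+1} - x_0\| \leq t_{k+1} < r_0$. Part (a) follows from the Banach lemma applied to $\I - F'(x_0)^{-1}F'(x_k)$ via the $L$-average Lipschitz condition: $\|F'(x_0)^{-1}[F'(x_k) - F'(x_0)]\| \leq \int_0^{t_k} L(u)\dif u = 1 + h'(t_k) < 1$. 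For (c) and for (b) at the next step, the identities
\[
F(y_k) = \int_0^1 \bigl[F'(x_k + \tau(y_k - x_k)) - F'(x_k)\bigr](y_k - x_k)\dif\tau,
\]
\[
F(x_{k+1}) = \int_0^1 \bigl[F'(y_k + \tau(x_{k+1} - y_k)) - F'(x_k)\bigr](x_{k+1} - y_k)\dif\tau,
\]
obtained respectively from $F(x_k) + F'(x_k)(y_k - x_k) = 0$ and $F(y_k) + F'(x_k)(x_{k+1} - y_k) = 0$, are plugged into the $L$-average Lipschitz estimate; the resulting double integrals collapse, via the scalar cancellations $h(t_k) + h'(t_k)(s_k - t_k) = 0$ and $h(s_k) + h'(t_k)(t_{k+1} - s_k) = 0$ built into \eqref{it:TwoStepNM_Scalar}, to the targets $h(s_k)$ and $h(t_{k+1})$.

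Uniqueness in $\overline{\ball(x_0, r)}$ with $t^* \leq r < t^{**}$ follows by assuming a second zero $\tilde x$, writing $0 = F(\tilde x) - F(x^*) = \bigl[\int_0^1 F'(x^* + \tau(\tilde x - x^*))\dif\tau\bigr](\tilde x - x^*)$, and showing via the $L$-average Lipschitz estimate (together with a change of variables $s = (1-\tau)t^* + \tau r$ and the sign $h(r) < 0$ on $(t^*, t^{**})$ from Lemma~\ref{lem:h(t)_property}) that the bracketed operator is invertible. For the asymptotic rate, the key representation
\[
x^* - x_{k+1} = F'(x_k)^{-1}\int_0^1 \bigl[F'(x^* + \tau(y_k - x^*)) - F'(x_k)\bigr](y_k - x^*)\dif\tau,
\]
obtained from $F(x^*) = 0$ and the definition of $x_{k+1}$, together with the preliminary Newton bound $\|y_k - x^*\| \leq C\|x_k - x^*\|^2$, already gives $\|x^* - x_{k+1}\| \leq C'\|x_k - x^*\|^3$ and hence the Q-superquadratic statement. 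To extract the sharp constant in \eqref{estimate:norm_x*-xk+1_LAverageLipCond} I would pass to the limit $t_k \uparrow t^*$ in the $L$-average Lipschitz bound and invoke Lemma~\ref{lem:ConvexProperty} applied to $h'$ at $t^*$; the factor $(2 - t^*H_*)/(2 + t^*H_*)$ emerges naturally, and condition \eqref{cond:CubicConv_Cond} is precisely what keeps its denominator positive. I expect the main obstacle to be the careful bookkeeping in parts (b) and (c) of the induction, namely matching the double integrals exactly to $h(s_k)$ and $h(t_{k+1})$, together with the delicate extraction of the asymptotic cubic constant $\tfrac12 H_*^2(2 - t^*H_*)/(2 + t^*H_*)$.
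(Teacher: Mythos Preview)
Your majorization scheme and the inductive claims (a)--(c) match the paper's Lemma~4.4 almost verbatim, and your route to convergence and $F(x^*)=0$ via the Cauchy property and continuity is the same as the paper's Lemma~4.5. Your uniqueness argument differs: the paper does not invert the averaged derivative but instead (i) replays the error bound $\|x^{**}-x_k\|\le t^*-t_k$ for any second zero $x^{**}\in\overline{\ball(x_0,t^*)}$, forcing $x^{**}=\lim x_k=x^*$, and (ii) shows that a zero $x^{**}$ with $t^*<\|x^{**}-x_0\|<r$ would imply $h(\|x^{**}-x_0\|)\ge 0$, contradicting $h<0$ on $(t^*,t^{**})$. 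Your mean-value approach is a legitimate alternative; just note that at the endpoint $r=t^*$ the Banach-lemma bound is exactly $1$, so you still need the replay argument there.

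Where your sketch has a genuine gap is the sharp cubic constant in \eqref{estimate:norm_x*-xk+1_LAverageLipCond}. The factor $(2-t^*H_*)/(2+t^*H_*)$ does \emph{not} come from ``passing to the limit $t_k\uparrow t^*$''; in that limit the relevant ratio tends to $1$, not to this factor. In the paper the constant arises from an auxiliary estimate (their Lemma~4.6),
\[
\frac{\|y_k-x_k\|}{s_k-t_k}\;\le\;\frac{1-\tfrac{H_*}{2}(t^*-t_k)}{1+\tfrac{H_*}{2}(t^*-t_k)}\cdot\frac{\|x^*-x_k\|}{t^*-t_k},
\]
which in turn rests on the lower bound $s_k-t_k\ge (t^*-t_k)+\tfrac{H_*}{2}(t^*-t_k)^2$ (their Lemma~4.1) and the refined Newton bound $\|x^*-y_k\|\le (t^*-s_k)\bigl(\|x^*-x_k\|/(t^*-t_k)\bigr)^2$. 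One then splits $x^*-x_{k+1}$ into a term with $F'(y_k^\tau)-F'(y_k)$ and a term with $F'(y_k)-F'(x_k)$, applies the ratio above to the second term, and takes the \emph{worst case} $k=0$ (where $t^*-t_0=t^*$) to obtain the uniform bound \eqref{estimate:norm_x*-xk+1_LAverageLipCond} for all $k\ge0$; condition~\eqref{cond:CubicConv_Cond} is exactly what keeps the denominator $1+\tfrac{H_*}{2}t^*$ positive at this worst case. Your single-integral representation $x^*-x_{k+1}=F'(x_k)^{-1}\int_0^1[F'(x^*+\tau(y_k-x^*))-F'(x_k)](y_k-x^*)\dif\tau$ gives some cubic bound, but without the two-term splitting and the $\|y_k-x_k\|/(s_k-t_k)$ estimate you will not recover the stated constant. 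This is also why the paper separates superquadratic convergence (proved via $\|x^*-x_{k+1}\|\le(t^*-t_{k+1})\bigl(\|x^*-x_k\|/(t^*-t_k)\bigr)^2$ and the scalar cubic rate for $\{t_k\}$, with no extra hypothesis) from the sharp cubic bound (which needs~\eqref{cond:CubicConv_Cond}); your write-up conflates the two.
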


\begin{myremark}
The convergence criterion $0 < \beta \leq b$ given in Theorem \ref{th:SemilocalConv} 
was obtained by Wang in \cite{Wang1999} for studying the quadratic convergence 
of Newton's method \eqref{it:NM} under a unified framework. As is stated in Theorem \ref{th:SemilocalConv},
this criterion guarantees only superquadratic convergence for the two-step
Newton method \eqref{it:TwoStepNM}. To obtain cubic convergence, we also need
the condition \eqref{cond:CubicConv_Cond}.
\end{myremark}

The proof of Theorem \ref{th:SemilocalConv} will be presented in Section 4.
In what follows, based on Theorem \ref{th:SemilocalConv}, we will obtain some corollaries by taking various forms
of the positive function $L$.

Firstly, for the case when $L$ is a positive constant function, then the $L$-average Lipschitz condition
\eqref{cond:LAverageLipCond}
reduces to the following affine-invariant Lipschitz condition:
\begin{equation}
\label{cond:LipCond}
\|F'(x_0)^{-1}[F'(y) - F'(x)]\| \leq L\|y - x\|, \quad x, y \in \ball(x_0, r_0),
\end{equation}
where $r_0 = 1/L$ due to \eqref{cons:r0}. The majorizing function $h$ defined by \eqref{fun:h(t)}
now has the form below:
$$
h(t) = \beta - t + \frac{L}{2} t^2, \quad t \in [0, R],
$$
where $R = 2/L$ by \eqref{cons:R}. The constant $b$ defined in \eqref{cons:b} reduces to
$b = 1/(2L)$. Moreover, thanks to Lemma \ref{lem:h(t)_property}, if $L\beta < 1/2$,
then the zeros of $h$ in $(0, 1/L)$ and $(1/L, 2/L)$ are
\begin{equation}
\label{cons:t*t**_LipCond}
t^* = \frac{1 - \sqrt{1-2L\beta}}{L} \quad \text{and} \quad t^{**} = \frac{1 + \sqrt{1-2L\beta}}{L},
\end{equation}
respectively. Therefore, we have the following Kantorovich-type convergence result from Theorem \ref{th:SemilocalConv} for
two-step Newton method \eqref{it:TwoStepNM} under Lipschitz condition \eqref{cond:LipCond}.

\begin{mycorollary}
\label{cor:Conv_LipCond}
Let $F: \DS \subset \XS \to \YS$ be a continuously Fr\'{e}chet differentiable nonlinear operator,
$\DS$ open and convex. Assume that there exists an initial point $x_0 \in \DS$ such that
$F'(x_0)^{-1}$ exists and that $F'$ satisfies the Lipschitz condition \eqref{cond:LipCond}.
Let $\{x_k\}$ be the sequence generated by the two-step Newton method \eqref{it:TwoStepNM}
with initial point $x_0$. If $0 < L\beta \leq 1/2$, then $\{x_k\}$ is well-defined and converges
Q-superquadratically to a solution
$x^* \in \overline{\ball(x_0, t^*)}$ of \eqref{eq:F(x)=0}, and this solution $x^*$
is unique in $\overline{\ball(x_0, r)}$, where $t^* \leq r < t^{**}$,
$t^*$ and $t^{**}$ are given in \eqref{cons:t*t**_LipCond}.
Moreover, if $0 < L\beta < 4/9$, then the order of convergence is cubic at least and
we have the following error bounds
$$
\|x^* - x_{k+1}\|
    \leq \frac{L^2}{2(1-2L\beta)}
        \cdot \frac{\sqrt{1-2L\beta} + 1}{3\sqrt{1-2L\beta} - 1}
        \cdot \|x^* - x_k\|^3, \quad k \geq 0.
$$
\end{mycorollary}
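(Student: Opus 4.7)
The plan is to deduce this corollary as a direct specialization of Theorem \ref{th:SemilocalConv} by taking $L(\cdot)$ to be the constant function equal to $L$. First I would verify that, with this choice, the $L$-average Lipschitz condition \eqref{cond:LAverageLipCond} collapses to the ordinary affine-invariant Lipschitz condition \eqref{cond:LipCond}: the integral on the right side of \eqref{cond:LAverageLipCond} becomes $L\|y-x\|$, and the side constraint $\|x-x_0\|+\|y-x\|<r_0=1/L$ can be weakened to requiring $x,y\in\ball(x_0,r_0)$ via the standard observation that both conditions are invoked only along the segment from $x$ to $y$ with $x,y$ lying in an appropriate ball; thus \eqref{cond:LipCond} clearly implies \eqref{cond:LAverageLipCond} with $L$ constant.

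Next I would compute all the quantities that appear in Theorem \ref{th:SemilocalConv}: from \eqref{cons:r0} and \eqref{cons:R} one gets $r_0=1/L$ and $R=2/L$, the majorizing function takes the quadratic form $h(t)=\beta-t+\tfrac{L}{2}t^2$, and from \eqref{cons:b} one has $b=1/(2L)$. Thus the hypothesis $0<L\beta\leq 1/2$ is exactly $0<\beta\leq b$, and the zeros $t^*$, $t^{**}$ of $h$ in $(0,r_0)$ and $(r_0,R)$ are given explicitly by \eqref{cons:t*t**_LipCond}. Applying Theorem \ref{th:SemilocalConv} directly then yields the well-definedness of $\{x_k\}$, its Q-superquadratic convergence to a solution $x^*\in\overline{\ball(x_0,t^*)}$, and uniqueness in $\overline{\ball(x_0,r)}$ for every $t^*\le r<t^{**}$.

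For the cubic part I would simply translate the condition \eqref{cond:CubicConv_Cond} and the error bound \eqref{estimate:norm_x*-xk+1_LAverageLipCond} via the explicit values $h'(t^*)=-\sqrt{1-2L\beta}$, $h''(t^*)=L$, $t^*=(1-\sqrt{1-2L\beta})/L$, whence
\[
H_*=\frac{h''(t^*)}{h'(t^*)}=-\frac{L}{\sqrt{1-2L\beta}},\qquad t^*H_*=-\frac{1-\sqrt{1-2L\beta}}{\sqrt{1-2L\beta}}.
\]
A short manipulation shows $2+t^*H_*>0$ iff $3\sqrt{1-2L\beta}>1$ iff $L\beta<4/9$, which is the stated cubic-convergence criterion. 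Substituting the above into \eqref{estimate:norm_x*-xk+1_LAverageLipCond} and clearing denominators by multiplying numerator and denominator of the factor $(2-t^*H_*)/(2+t^*H_*)$ by $\sqrt{1-2L\beta}$ produces $(\sqrt{1-2L\beta}+1)/(3\sqrt{1-2L\beta}-1)$, while $\tfrac{1}{2}H_*^2=L^2/[2(1-2L\beta)]$, giving exactly the error bound claimed in the corollary.

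There is essentially no main obstacle here; the statement is a verification-style corollary, and the only part that requires care is the algebraic simplification of the cubic-convergence factor, where one must keep track of signs since $h'(t^*)<0$ so that $H_*$ is negative while $t^*H_*\in(-1,0)$ under the hypothesis $L\beta<1/2$. The equivalence in \eqref{cond:CubicConv_Cond} lets one work with whichever form is more convenient, and I would present the sign check explicitly to make the passage from $2+t^*H_*>0$ to $L\beta<4/9$ transparent.
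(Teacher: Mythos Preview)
Your proposal is correct and matches exactly the paper's own treatment: the corollary is obtained by specializing Theorem~\ref{th:SemilocalConv} to the constant weight $L(\cdot)\equiv L$, after which the paper records (in the paragraph preceding the corollary) precisely the computations of $r_0$, $R$, $h$, $b$, $t^*$, $t^{**}$ that you reproduce, and leaves the algebraic simplification of \eqref{estimate:norm_x*-xk+1_LAverageLipCond} implicit. One harmless slip: your parenthetical claim that $t^*H_*\in(-1,0)$ under $L\beta<1/2$ is not quite right (with $s=\sqrt{1-2L\beta}$ one has $t^*H_*=-(1-s)/s$, which lies in $(-2,0)$ precisely when $L\beta<4/9$); this does not affect any of the actual computations.
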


\begin{myremark}
The convergence result in Corollary \ref{cor:Conv_LipCond} is obtained under the weaker assumption on
the Lipschitz condition in comparison with the one presented in \cite{AmatBG2011}, where the assumption
that the second derivative satisfies the Lipschitz condition is needed.
To obtain the Q-cubic convergence, we need the convergence criterion $0 < L\beta < 4/9$,
which has slightly stronger than the usual one $0 < L\beta < 1/2$ for ensuring the quadratic 
convergence of  Newton's method \eqref{it:NM}.
\end{myremark}

Secondly, we suppose that $\gamma > 0$. Let $L$ be the positive function defined by
\begin{equation*}
L(u) := \frac{2\gamma}{(1 - \gamma u)^3}, \quad u \in [0, \frac{1}{\gamma}).
\end{equation*}
Then, the $L$-average Lipschitz condition \eqref{cond:LAverageLipCond} reduces to
\begin{equation}
\label{cond:GammaCond_weak}
\|F'(x_0)^{-1}[F'(y) - F'(x)]\| \leq \frac{1}{(1-\gamma\|x-x_0\|-\|y-x\|)^2} - \frac{1}{(1-\gamma\|x-x_0\|)^2},
\end{equation}
The majorizing function $h$ defined by \eqref{fun:h(t)} reduces to
\begin{equation}
\label{fun:h(t)_GammaCond}
h(t) = \beta - t + \frac{\gamma t^2}{1 - \gamma t}, \quad t \in [0, \frac{1}{\gamma}).
\end{equation}
The constants $r_0$ and $b$ defined in \eqref{cons:r0} and \eqref{cons:b} are given by
$$
r_0 = \left(1 - \frac{1}{\sqrt{2}}\right) \frac{1}{\gamma} \quad \text{and} \quad  b = (3 - 2\sqrt{2})\frac{1}{\gamma},
$$
respectively. If $\alpha := \beta \gamma \leq 3 - 2\sqrt{2}$, then the zeros of $h$ are
\begin{equation}
\label{cons:t*t**_gammaCond}
t^* = \frac{1+\alpha - \sqrt{(1+\alpha)^2 - 8\alpha}}{4\gamma}
    \quad \text{and} \quad
t^{**} = \frac{1+\alpha + \sqrt{(1+\alpha)^2 - 8\alpha}}{4\gamma},
\end{equation}
respectively. Then, the constant $H_* := h''(t^*)/h'(t^*)$ given in Theorem \ref{th:SemilocalConv}
now has the following concrete form:
\begin{equation}
\label{cons:H*}
H_*  = - \frac{32\gamma}{\sqrt{(1+\alpha)^2 - 8\alpha} \cdot (3 - \alpha + \sqrt{(1+\alpha)^2 - 8\alpha})^2}.
\end{equation}
Consequently, we have the following Smale-type convergence result from Theorem \ref{th:SemilocalConv} for
two-step Newton method \eqref{it:TwoStepNM} under the condition \eqref{cond:GammaCond_weak}.

\begin{mycorollary}
\label{cor:Conv_GammaCond_weak}
Let $F: \DS \subset \XS \to \YS$ be a continuously Fr\'{e}chet differentiable nonlinear operator,
$\DS$ open and convex. Assume that there exists an initial point $x_0 \in \DS$ such that
$F'(x_0)^{-1}$ exists and that $F'$ satisfies the condition \eqref{cond:GammaCond_weak}.
Let $\{x_k\}$ be the sequence generated by the two-step Newton method \eqref{it:TwoStepNM}
with initial point $x_0$. If $0 < \alpha \leq 3 - 2\sqrt{2}$, then $\{x_k\}$ is well-defined and converges
Q-superquadratically to a solution
$x^* \in \overline{\ball(x_0, t^*)}$ of \eqref{eq:F(x)=0}, and this solution $x^*$
is unique in $\overline{\ball(x_0, r)}$, where $t^* \leq r < t^{**}$,
$t^*$ and $t^{**}$ are given in \eqref{cons:t*t**_gammaCond}.
Moreover, if $0 < \alpha < 3 - \sqrt[3]{2} - \sqrt[3]{4}$, then the order of convergence is cubic at least and
we have the following error bounds
\begin{equation}
\label{estimate:norm_x*-xk+1_GammaCond}
\|x^* - x_{k+1}\| \leq \frac{q H_*^2}{2}\cdot \|x^* - x_k\|^3, \quad k \geq 0,
\end{equation}
where $H_*$ is given in \eqref{cons:H*} and
$$
q := \frac{\sqrt{(1+\alpha)^2 - 8\alpha} \cdot (3 - \alpha + \sqrt{(1+\alpha)^2 - 8\alpha})^2 + 4(1 + \alpha - \sqrt{(1+\alpha)^2 - 8\alpha})}{\sqrt{(1+\alpha)^2 - 8\alpha} \cdot (3 - \alpha + \sqrt{(1+\alpha)^2 - 8\alpha})^2 - 4(1 + \alpha - \sqrt{(1+\alpha)^2 - 8\alpha})}.
$$
\end{mycorollary}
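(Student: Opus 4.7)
The plan is to derive Corollary~\ref{cor:Conv_GammaCond_weak} as a direct specialization of Theorem~\ref{th:SemilocalConv} to the positive nondecreasing function $L(u) = 2\gamma/(1-\gamma u)^3$ on $[0,1/\gamma)$. First I would verify that the $L$-average Lipschitz condition \eqref{cond:LAverageLipCond} reduces to \eqref{cond:GammaCond_weak} by direct integration, using $\int_a^{a+b}\!2\gamma/(1-\gamma u)^3\,\dif{u} = 1/(1-\gamma(a+b))^2 - 1/(1-\gamma a)^2$. Analogous elementary integrations yield the majorizing function \eqref{fun:h(t)_GammaCond}, while \eqref{cons:r0} and \eqref{cons:b} produce the stated $r_0 = (1-1/\sqrt{2})/\gamma$ and $b = (3-2\sqrt{2})/\gamma$. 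The expressions for $t^*$ and $t^{**}$ in \eqref{cons:t*t**_gammaCond} follow by clearing the denominator in $h(t)=0$ and solving the resulting quadratic; the discriminant $(1+\alpha)^2 - 8\alpha = \alpha^2 - 6\alpha + 1$ is nonnegative exactly when $\alpha \le 3 - 2\sqrt{2}$, which is consistent with the hypothesis $0 < \beta \le b$ of Theorem~\ref{th:SemilocalConv}. The Q-superquadratic convergence and the existence/uniqueness statements are then immediate consequences of that theorem.

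For the explicit form of $H_*$ in \eqref{cons:H*}, I would write $h''(t^*) = L(t^*) = 2\gamma/(1-\gamma t^*)^3$ and $h'(t^*) = 1/(1-\gamma t^*)^2 - 2$, set $D := \sqrt{(1+\alpha)^2 - 8\alpha}$, and exploit $1-\gamma t^* = (3-\alpha+D)/4$. The single nontrivial simplification is the identity $8 - (3-\alpha+D)^2 = -2D(3-\alpha+D)$, which drops out of $(3-\alpha)^2 = D^2 + 8$ after expanding $(3-\alpha+D)^2$. Substituting into $h''(t^*)/h'(t^*)$ yields the claimed closed form for $H_*$.

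The translation of the cubic-convergence condition \eqref{cond:CubicConv_Cond} is the main obstacle. Setting $u := \gamma t^*$ and clearing positive factors, the inequality $2 + t^*h''(t^*)/h'(t^*) > 0$ is equivalent to
\[
  2u^3 - 6u^2 + 6u - 1 < 0.
\]
The key observation is the factorization $2u^3 - 6u^2 + 6u - 1 = 2(u-1)^3 + 1$, which collapses the inequality to $u < 1 - 2^{-1/3}$. From $h(t^*) = 0$ one derives the auxiliary relation $\alpha(1-u) = u(1-2u)$, i.e.\ $\alpha = u(1-2u)/(1-u)$; substituting $u_c = 1 - 2^{-1/3}$ and simplifying (carefully tracking the powers $2^{1/3}$, $2^{2/3}$) gives the critical value $\alpha_c = 3 - \sqrt[3]{2} - \sqrt[3]{4}$ stated in the corollary.

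Finally, the error estimate \eqref{estimate:norm_x*-xk+1_GammaCond} follows from the bound $\tfrac{1}{2}H_*^2(2 - t^*H_*)/(2 + t^*H_*)$ of Theorem~\ref{th:SemilocalConv}. Using $t^* = (1+\alpha-D)/(4\gamma)$ together with the explicit $H_*$ gives $t^* H_* = -8(1+\alpha-D)/[D(3-\alpha+D)^2]$, so the numerator and denominator of $(2 - t^*H_*)/(2 + t^*H_*)$ simplify (after clearing the common factor $D(3-\alpha+D)^2$) to $D(3-\alpha+D)^2 \pm 4(1+\alpha-D)$, which is precisely the definition of $q$. Positivity of the denominator is guaranteed by the cubic criterion just derived, completing the proof.
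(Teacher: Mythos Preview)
Your proposal is correct and follows exactly the paper's approach: the corollary is stated there as an immediate specialization of Theorem~\ref{th:SemilocalConv} to $L(u)=2\gamma/(1-\gamma u)^3$, with no separate proof given. You have simply supplied the elementary computations the paper leaves implicit (the integrals yielding $h$, $r_0$, $b$, the quadratic for $t^*,t^{**}$, the closed form of $H_*$, the reduction of \eqref{cond:CubicConv_Cond} to $u<1-2^{-1/3}$ via $2(u-1)^3+1$, and the algebra producing $q$); all of these check out.
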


\begin{myremark}
If $F$ is twice continuously Fr\'{e}chet differentiable, then $F'$ satisfies the condition \eqref{cond:GammaCond_weak}
if and only if $F$ satisfies the following condition
\begin{equation}
\label{cond:GammaCond}
\|F'(x_0)^{-1}F''(x)\| \leq \frac{2\gamma}{(1 - \gamma \|x - x_0\|)^3}, \quad x \in \ball(x_0, 1/\gamma).
\end{equation}
In fact, if $F$ satisfies \eqref{cond:GammaCond_weak}, then \eqref{cond:GammaCond} holds trivially.
Conversely, if $F$ satisfies \eqref{cond:GammaCond}, then by noting that $h'(t) = -2 + 1/(1-\gamma t)^2$ and
$h''(t) = 2\gamma/(1-\gamma t)^3$, we have
\begin{align*}
\|F'(x_0)^{-1}[F'(y) - F'(x)]\|
    &\leq \int_0^1 \|F'(x_0)^{-1}F''(x + \tau(y-x))\|\|y-x\| \dif{\tau} \\
    &\leq \int_0^1 h''(\|x-x_0\| + \tau\|y - x\|) \|y - x\| \dif{\tau} \\
    &= h'(\|x - x_0\| + \|y - x\|) - h'(\|x - x_0\|),
\end{align*}
which means that $F$ satisfies \eqref{cond:GammaCond_weak}.
The condition \eqref{cond:GammaCond} is called the $\gamma$-condition which
was introduced by Wang and Han in \cite{WangH1997a} to study the Smale point estimate theory.
Based on the above
observation, the convergence result stated in Corollary \ref{cor:Conv_GammaCond_weak} also hold when
the condition \eqref{cond:GammaCond_weak} is replaced by the $\gamma$-condition \eqref{cond:GammaCond}.
\end{myremark}

One important and typical class of examples satisfying the $\gamma$-condition
is the one of analytic operators. Smale \cite{Smale1986} studied the convergence and
error estimate of Newton's method \eqref{it:NM} under the hypotheses that
$F$ is analytic and satisfies
$$
\|F'(x_0)^{-1}F^{(n)}(x_0)\| \leq n! \gamma^{n-1}, \quad n \geq 2,
$$
where $\gamma$ is given by
\begin{equation}
\label{cons:gamma}
\gamma := \sup_{k > 1} \left\|\frac{F'(x_0)^{-1}F^{(k)}(x_0)}{k!}\right\|^{\frac{1}{k-1}}.
\end{equation}
We then obtain from Theorem \ref{th:SemilocalConv} that another Smale-type convergence result
of the two-step Newton method \eqref{it:NM} for the analytic operator.

\begin{mycorollary}
\label{cor:Conv_AnalyticOperator}
Let $F: \DS \subset \XS \to \YS$ be an analytic operator,
$\DS$ open and convex. Assume that there exists an initial point $x_0 \in \DS$ such that
$F'(x_0)$ is nonsingular.
Let $\{x_k\}$ be the sequence generated by the two-step Newton method \eqref{it:TwoStepNM}
with initial point $x_0$. If $0 < \alpha :=\beta \gamma \leq 3 - 2\sqrt{2}$, where $\gamma$ is given
by \eqref{cons:gamma},
then $\{x_k\}$ is well-defined and converges Q-superquadratically to a solution
$x^* \in \overline{\ball(x_0, t^*)}$ of \eqref{eq:F(x)=0}, and this solution $x^*$
is unique in $\overline{\ball(x_0, r)}$, where $t^* \leq r < t^{**}$,
$t^*$ and $t^{**}$ are given in \eqref{cons:t*t**_gammaCond}.
Moreover, the order of convergence is cubic at least and
the error estimate \eqref{estimate:norm_x*-xk+1_GammaCond} holds
when $0 < \alpha < 3 - \sqrt[3]{2} - \sqrt[3]{4}$.
\end{mycorollary}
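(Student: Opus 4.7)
The plan is to deduce this corollary directly from Corollary \ref{cor:Conv_GammaCond_weak} by exhibiting a valid choice of $\gamma$ for which the hypotheses of that corollary are satisfied. The remark following Corollary \ref{cor:Conv_GammaCond_weak} already establishes that, for a twice continuously Fr\'{e}chet differentiable $F$, the weak $\gamma$-condition \eqref{cond:GammaCond_weak} is equivalent to the (strong) $\gamma$-condition \eqref{cond:GammaCond}. Since an analytic operator is in particular $C^2$, the task reduces to verifying \eqref{cond:GammaCond} on $\ball(x_0,1/\gamma)$ with $\gamma$ taken to be the supremum in \eqref{cons:gamma}.

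To do so, I would first write the Taylor expansion of $F''$ about $x_0$, which converges on $\ball(x_0, 1/\gamma)$ because the definition \eqref{cons:gamma} forces the radius of convergence of the power series of $F'(x_0)^{-1}F$ to be at least $1/\gamma$:
\[
F''(x) = \sum_{n=2}^{\infty} \frac{F^{(n)}(x_0)}{(n-2)!}(x - x_0)^{n-2}.
\]
Composing with $F'(x_0)^{-1}$, taking norms term-by-term, and applying the bound $\|F'(x_0)^{-1}F^{(n)}(x_0)\| \leq n!\,\gamma^{n-1}$ (immediate from \eqref{cons:gamma}) would yield
\[
\|F'(x_0)^{-1}F''(x)\| \leq \gamma\sum_{n=2}^{\infty} n(n-1)\bigl(\gamma\|x-x_0\|\bigr)^{n-2}.
\]
Since the remaining series is the second derivative of the geometric series $\sum_{n\geq 0} u^n = 1/(1-u)$ evaluated at $u = \gamma\|x-x_0\| < 1$, it sums to $2/(1-\gamma\|x-x_0\|)^3$, giving precisely \eqref{cond:GammaCond}.

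With the $\gamma$-condition verified, every conclusion transfers by a direct invocation of Corollary \ref{cor:Conv_GammaCond_weak}: well-definedness of $\{x_k\}$, containment of $x^*$ in $\overline{\ball(x_0,t^*)}$, uniqueness in $\overline{\ball(x_0,r)}$ for $t^* \leq r < t^{**}$, Q-superquadratic convergence under $0 < \alpha \leq 3 - 2\sqrt{2}$, and the cubic error bound \eqref{estimate:norm_x*-xk+1_GammaCond} under the stronger $0 < \alpha < 3 - \sqrt[3]{2} - \sqrt[3]{4}$. I do not foresee a genuine obstacle: the analytic-to-$\gamma$-condition passage is a classical computation going back to Smale, and all of the hard convergence analysis has already been absorbed into Theorem \ref{th:SemilocalConv} and its $\gamma$-specialization in Corollary \ref{cor:Conv_GammaCond_weak}. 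The only point requiring mild care is the justification of term-by-term norm estimates on the entire ball $\ball(x_0,1/\gamma)$; this is guaranteed by absolute convergence of $\sum_n n(n-1)u^{n-2}$ on $|u|<1$ together with the domination provided by the definition of $\gamma$.
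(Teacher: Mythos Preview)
Your proposal is correct and follows essentially the same route the paper takes: the paper does not give an explicit proof but sets up the corollary by noting that analytic operators with the Smale-type bound $\|F'(x_0)^{-1}F^{(n)}(x_0)\| \leq n!\,\gamma^{n-1}$ satisfy the $\gamma$-condition, after which the result follows from Theorem~\ref{th:SemilocalConv} (equivalently, from Corollary~\ref{cor:Conv_GammaCond_weak} via the remark you cite). Your Taylor-series computation is precisely the standard verification of \eqref{cond:GammaCond} in this setting.
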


Lastly, we present a semilocal convergence result from Theorem \ref{th:SemilocalConv}
for the two-step Newton method \eqref{it:TwoStepNM}
under the condition introduced by Nesterov and Nemirovskii in the seminal work \cite{NesterovN1994}.

Let $f: \DS \subset \RS^n \to \RS$ be a strictly convex and three times continuously differentiable function,
$\DS$ open and convex. Let $\vec{x}_0 \in \DS$ be an initial point such that the inverse $f''(\vec{x}_0)^{-1}$ exists.
For a given constant $a > 0$, if $f$ satisfies the inequality
$$
|f'''(\vec{x})[\vec{u},\vec{u},\vec{u}]| \leq 2 a^{-1/2} \left(f''(\vec{x})[\vec{u},\vec{u}]\right)^{3/2},  \quad \vec{x} \in \DS, \vec{u} \in \RS^n,
$$
then we say that $f$ is $a$-self-concordant \cite{NesterovN1994}. For $\vec{x} \in \DS$, we set
$$
\left<\vec{u}_1, \vec{u}_2\right>_{\vec{x}} := a^{-1}\left<f''(\vec{x})\vec{u}_1,\vec{u}_2\right>, \quad \forall\, \vec{u}_1, \vec{u}_2 \in \RS^n,
$$
and define the norm
$$
\|\vec{u}\|_{\vec{x}} := \sqrt{\left<\vec{u}, \vec{u}\right>_{\vec{x}}}, \quad \forall\, \vec{u} \in \RS^n.
$$
Then $(\RS^n, \|\cdot\|_{\vec{x}})$ is a Banach space.

Choose $L(u) = 2/(1-u)^3$ in \eqref{fun:h(t)}. We obtain the following majorizing function
$$
h(t) = \beta - t + \frac{t^2}{1 - t}, \quad t \in [0,1),
$$
which is a special case of the one given in \eqref{fun:h(t)_GammaCond}; i.e., $\gamma \equiv 1$.
Then there exists two zeros for this majorizing function when $\beta \leq 3 - 2\sqrt{2}$. These two zeros are as follows:
\begin{equation}
\label{cons:t*t**_Nesterov}
t^* = \frac{1+\beta - \sqrt{(1+\beta)^2 - 8\beta}}{4}
    \quad \text{and} \quad
t^{**} = \frac{1+\beta + \sqrt{(1+\beta)^2 - 8\beta}}{4}.
\end{equation}
For a given vector $\vec{x} \in \RS^n$ and a positive number $r$, we set
$$
\ball_{r}(\vec{x}) := \{\vec{y} \in \RS^n: \|\vec{y} - \vec{x}\|_{\vec{x}} < r\}
    \quad \text{and} \quad
\overline{\ball_{r}(\vec{x})} := \{\vec{y} \in \RS^n: \|\vec{y} - \vec{x}\|_{\vec{x}} \leq r\}.
$$
They correspond to the open ball and its closure of center $\vec{x}$ and radius $r$ when $\RS^n$
is endowed with the metric structure induced by the preceding inner product $\left<\cdot,\cdot\right>_{\vec{x}}$.
If $f$ is an $a$-self-concordant function, then we have (see \cite[Lemma 5.1]{AlvarezBM2008})
$$
\|f''(\vec{x}_0)^{-1}f'''(\vec{x})\|_{\vec{x}_0} \leq \frac{2}{(1 - \|\vec{x} - \vec{x}_0\|_{\vec{x}_0})^3}, \quad \vec{x} \in \ball_{1}(\vec{x}_0).
$$
This implies that $f''$ satisfies the $L$-average Lipschitz condition \eqref{cond:LAverageLipCond} with $L(u) = 2/(1-u)^3$.
Let $\XS = \YS = (\RS^n, \|\cdot\|_{\vec{x}})$. Then, by Theorem \ref{th:SemilocalConv},
we have the following semilocal convergence result about the minimization of $a$-self-concordant function
for two-step Newton method which defined by
\begin{equation}
\label{it:TwoStepNM_SelfConc}
\left\{
\begin{aligned}
& \vec{y}_k = \vec{x}_k - f''(\vec{x}_k)^{-1}f'(\vec{x}_k), \\
& \vec{x}_{k+1} = \vec{y}_k - f''(\vec{x}_k)^{-1}f'(\vec{y}_k),
\end{aligned}\right.
\quad k = 0,1,2,\ldots.
\end{equation}
\begin{mycorollary}
Let $f: \DS \subset \RS^n \to \RS$ be an $a$-self-concordant function, $\DS$ open and convex.
Assume that there exists an initial point $\vec{x}_0 \in \DS$ such that $f''(\vec{x}_0)$ is nonsingluar.
Let $\{\vec{x}_k\}$ be the vector sequence generated by the two-step Newton method \eqref{it:TwoStepNM_SelfConc}
for solving $f'(\vec{x}) = \vec{0}$ with initial point $\vec{x}_0$.
If $\beta := \|f''(\vec{x}_0)f'(\vec{x}_0)\|_{\vec{x}_0} \leq 3 - 2\sqrt{2}$,
then $\{\vec{x}_k\}$ is well-defined and converges Q-superquadratically to a point
$\vec{x}^*$ which is the minimizer of $f$ in $\overline{\ball_{t^*}(\vec{x}_0)}$,
and this minimizer $\vec{x}^*$
is unique in $\overline{\ball_r(\vec{x}_0)}$, where $t^* \leq r < t^{**}$,
$t^*$ and $t^{**}$ are given in \eqref{cons:t*t**_Nesterov}.
Moreover,  if $0 < \beta < 3 - \sqrt[3]{2} - \sqrt[3]{4}$, then
the order of convergence is cubic at least and we have the following error bounds
$$
\|\vec{x}^* - \vec{x}_{k+1}\| \leq \frac{q H_*^2}{2}\cdot \|\vec{x}^* - \vec{x}_k\|^3, \quad k \geq 0,
$$
where
$$
H_*  = - \frac{32}{\sqrt{(1+\beta)^2 - 8\beta} \cdot (3 - \beta + \sqrt{(1+beta)^2 - 8\beta})^2}
$$
and
$$
q = \frac{\sqrt{(1+\beta)^2 - 8\beta} \cdot (3 - \beta + \sqrt{(1+\beta)^2 - 8\beta})^2 + 4(1 + \beta - \sqrt{(1+\beta)^2 - 8\beta})}{\sqrt{(1+\beta)^2 - 8\beta} \cdot (3 - \beta + \sqrt{(1+\beta)^2 - 8\beta})^2 - 4(1 + \beta - \sqrt{(1+\beta)^2 - 8\beta})}.
$$
\end{mycorollary}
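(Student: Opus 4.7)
The plan is to reduce this corollary directly to Theorem~\ref{th:SemilocalConv}, or equivalently to Corollary~\ref{cor:Conv_GammaCond_weak} specialized at $\gamma = 1$. First I would set $F(\vec{x}) := f'(\vec{x})$ viewed as an operator on the Banach space $(\RS^n, \|\cdot\|_{\vec{x}_0})$; then $F'(\vec{x}) = f''(\vec{x})$, the invertibility of $F'(\vec{x}_0)$ is the standing hypothesis, and the scalar $\|F'(\vec{x}_0)^{-1}F(\vec{x}_0)\|_{\vec{x}_0}$ appearing in Theorem~\ref{th:SemilocalConv} coincides with the $\beta$ of the corollary. All that remains is to verify the $L$-average Lipschitz condition \eqref{cond:LAverageLipCond} for $F'$ on $\ball_1(\vec{x}_0)$ with $L(u) = 2/(1-u)^3$, which is precisely the choice of $L$ producing the majorizing function $h(t) = \beta - t + t^2/(1-t)$ used in the statement.

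To obtain this $L$-average bound, the key input is the self-concordance estimate $\|f''(\vec{x}_0)^{-1}f'''(\vec{x})\|_{\vec{x}_0} \leq 2/(1-\|\vec{x}-\vec{x}_0\|_{\vec{x}_0})^3$ for $\vec{x} \in \ball_1(\vec{x}_0)$, already recalled from \cite[Lemma~5.1]{AlvarezBM2008}. Integrating $f'''$ along the segment from $\vec{x}$ to $\vec{y}$ and using the identity $\int_0^1 L(a + \tau c)\,c\,\dif\tau = h'(a+c) - h'(a)$ --- exactly the manipulation carried out in the remark following Corollary~\ref{cor:Conv_GammaCond_weak} for the $\gamma$-condition --- delivers \eqref{cond:LAverageLipCond}. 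The rest is automatic: $\gamma = 1$ gives $b = 3 - 2\sqrt{2}$, the zeros $t^*, t^{**}$ become those in \eqref{cons:t*t**_Nesterov}, the hypothesis $\beta \leq 3 - 2\sqrt{2}$ is the criterion $0 < \beta \leq b$ of Theorem~\ref{th:SemilocalConv}, and the theorem therefore produces a well-defined sequence $\{\vec{x}_k\}$ that converges Q-superquadratically to a zero $\vec{x}^*$ of $f' = F$ in $\overline{\ball_{t^*}(\vec{x}_0)}$, unique in $\overline{\ball_r(\vec{x}_0)}$ for $t^* \leq r < t^{**}$. Strict convexity of $f$ on $\DS$ then promotes $\vec{x}^*$ from a critical point to the unique minimizer of $f$ on the respective balls.

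For the Q-cubic regime I would specialize the criterion \eqref{cond:CubicConv_Cond} at $\gamma = 1$. Substituting $L(u) = 2/(1-u)^3$ into $2 - t^*L(t^*)/(1 - \int_0^{t^*} L(u)\,\dif u) > 0$ and using $h(t^*) = 0$ to eliminate $\beta$, the inequality should simplify to $(1 - t^*)^3 > 1/2$, i.e.\ $t^* < 1 - 2^{-1/3}$; re-expressing this bound on $t^*$ through the relation $\beta = t^*(1 - 2t^*)/(1 - t^*)$ coming from $h(t^*)=0$ then yields the threshold $\beta < 3 - \sqrt[3]{2} - \sqrt[3]{4}$ announced in the corollary. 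Plugging $\gamma = 1$ into \eqref{cons:H*} and into the definition of $q$ in Corollary~\ref{cor:Conv_GammaCond_weak} produces the claimed closed forms for $H_*$ and $q$, and the cubic error estimate follows from \eqref{estimate:norm_x*-xk+1_LAverageLipCond} (equivalently \eqref{estimate:norm_x*-xk+1_GammaCond}). The only genuine obstacle is this last algebraic simplification: turning the condition \eqref{cond:CubicConv_Cond}, in which $t^*$ is itself irrational, into the tidy cube-root threshold requires careful bookkeeping with a cubic in $1 - t^*$, but is otherwise mechanical once the substitution is organized correctly.
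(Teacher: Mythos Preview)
Your proposal is correct and follows essentially the same route as the paper: set $F=f'$ on $(\RS^n,\|\cdot\|_{\vec{x}_0})$, invoke the self-concordance bound from \cite[Lemma~5.1]{AlvarezBM2008} to obtain the $L$-average Lipschitz condition with $L(u)=2/(1-u)^3$, and then apply Theorem~\ref{th:SemilocalConv} (equivalently Corollary~\ref{cor:Conv_GammaCond_weak} at $\gamma=1$). Your explicit verification that \eqref{cond:CubicConv_Cond} reduces to $(1-t^*)^3>1/2$ and hence to $\beta<3-\sqrt[3]{2}-\sqrt[3]{4}$ is a welcome detail the paper leaves implicit.
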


\begin{myremark}
Semilocal convergence result on the analysis of self-concordant minimization for Newton's method \eqref{it:NM}
has already been presented by Alvarez et al. in \cite{AlvarezBM2008}.
In addition, Ferreira and Svaiter \cite{FerreiraS2012} provided another semilocal convergence result
on self-concordant minimization for Newton's method with a relative error tolerance.
\end{myremark}

\section{The proof for Theorem \ref{th:SemilocalConv}}

This section is devoted to the proof of Theorem \ref{th:SemilocalConv}. We begin with some
technical lemmas about error estimates for the majorizing sequences $\{s_k\}$ and $\{t_k\}$ defined by \eqref{it:TwoStepNM_Scalar},
and about the relationship between the majorizing function $h(t)$ defined by \eqref{fun:h(t)} and the nonlinear operator $F$.
Then, we provide the convergence analysis of the two-step Newton method \eqref{it:TwoStepNM} presented in Theorem \ref{th:SemilocalConv}.
\subsection{Technical lemmas}

Recall that the majorizing function $h$ is defined by \eqref{fun:h(t)},
the majorizing sequences $\{s_k\}$ and $\{t_k\}$ defined by \eqref{it:TwoStepNM_Scalar},
$t^*$ the zero of $h$ on $[0, r_0]$, where $r_0$ satisfies \eqref{cons:r0}.
By Lemma \ref{lem:TwoStepNM_Scalar_Conv}, $\{s_k\}$ and $\{t_k\}$
converge increasingly to $t^*$ when $0 < \beta \leq b$,
where $b$ is defined by \eqref{cons:b}.

\begin{mylemma}
Let $\{s_k\}$ and $\{t_k\}$ be the sequences generated by \eqref{it:TwoStepNM_Scalar}.
Suppose that $0 < \beta \leq b$ and $2 + t^*h''(t^*)/h'(t^*) \geq 0$. Then we have
\begin{equation}
\label{ineq:sk-tk}
s_k - t_k \geq (t^* - t_k) + \frac{h''(t^*)}{2 h'(t^*)}(t^* - t_k)^2, \quad k \geq 0.
\end{equation}
\end{mylemma}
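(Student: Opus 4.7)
The plan is to clear the denominator $h'(t_k)$ from the stated inequality and reduce everything to a single integral inequality that follows from the monotonicity of $L$. First, under $0<\beta\leq b$, Lemma \ref{lem:TwoStepNM_Scalar_Conv} gives $0\leq t_k<t^*$, so $h(t_k)>0$ and $h'(t_k)\leq h'(t^*)<0$ because $h$ is convex and strictly decreasing on $[0,r_0]$. Since $s_k-t_k=-h(t_k)/h'(t_k)$, multiplying \eqref{ineq:sk-tk} through by $-h'(t_k)>0$ shows that the claim is equivalent to
\begin{equation*}
h(t_k)\ \geq\ -h'(t_k)(t^*-t_k)\ -\ \frac{h'(t_k)\,h''(t^*)}{2\,h'(t^*)}(t^*-t_k)^2.
\end{equation*}
The hypothesis $2+t^*h''(t^*)/h'(t^*)\geq 0$ is exactly what makes the right-hand side of \eqref{ineq:sk-tk} nonnegative (since $t^*-t_k\leq t^*$), but it will not be needed in the algebraic reduction.

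Next I would use Taylor's formula with integral remainder centered at $t^*$. Because $h(t^*)=0$ and $h''=L$, this yields
\begin{equation*}
h(t_k)=-h'(t^*)(t^*-t_k)+\int_{t_k}^{t^*} L(u)(u-t_k)\dif u,
\end{equation*}
and from $h'(t)=-1+\int_0^t L(u)\dif u$ we obtain
\begin{equation*}
-h'(t_k)=-h'(t^*)+\int_{t_k}^{t^*} L(u)\dif u.
\end{equation*}
Substituting these two identities and cancelling the common term $-h'(t^*)(t^*-t_k)$ on both sides reduces the claim to
\begin{equation*}
\int_{t_k}^{t^*} L(u)(t^*-u)\dif u\ \leq\ \frac{h'(t_k)\,h''(t^*)}{2\,h'(t^*)}(t^*-t_k)^2.
\end{equation*}

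Finally I would close the estimate by applying the monotonicity of $L$ in two different directions. Since $L$ is nondecreasing, $L(u)\leq L(t^*)=h''(t^*)$ for $u\in[t_k,t^*]$, so
\begin{equation*}
\int_{t_k}^{t^*} L(u)(t^*-u)\dif u\ \leq\ \frac{h''(t^*)}{2}(t^*-t_k)^2.
\end{equation*}
On the other hand, $h'(t_k)\leq h'(t^*)<0$ gives $h'(t_k)/h'(t^*)\geq 1$, hence
\begin{equation*}
\frac{h'(t_k)\,h''(t^*)}{2\,h'(t^*)}(t^*-t_k)^2\ \geq\ \frac{h''(t^*)}{2}(t^*-t_k)^2,
\end{equation*}
and the two bounds meet. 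The main obstacle is not analytical but structural: one has to see in advance which factor should be bounded above ($L(u)$ by $L(t^*)$) and which factor should be bounded below ($h'(t_k)/h'(t^*)$ by $1$). Once the reformulated integral inequality is exposed, the monotonicity of $L$ supplies both halves in one line each.
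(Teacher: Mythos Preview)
Your proof is correct. The approach is essentially the same as the paper's: both rely on exactly the same two monotonicity facts, namely $h'(t_k)\le h'(t^*)<0$ (so $h'(t_k)/h'(t^*)\ge 1$) and $L(u)\le L(t^*)=h''(t^*)$ on $[t_k,t^*]$ (equivalently, convexity of $h'$). The paper keeps the quotient form $s_k-t_k=(t^*-t_k)+\frac{1}{h'(t_k)}\int_0^1[h'(t_k+\tau(t^*-t_k))-h'(t_k)](t^*-t_k)\,d\tau$ and bounds the integral via Lemma~\ref{lem:ConvexProperty}, whereas you first clear $-h'(t_k)$, expand $h(t_k)$ and $h'(t_k)$ by Taylor at $t^*$, and reduce to the clean inequality $\int_{t_k}^{t^*}L(u)(t^*-u)\,du\le \tfrac{h''(t^*)}{2}(t^*-t_k)^2$; after a change of variables these are the same computation. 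Your observation that the hypothesis $2+t^*h''(t^*)/h'(t^*)\ge 0$ is not used in the argument is also correct---the paper's proof does not use it either.
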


\begin{proof}
Thanks to the definition of $\{s_k\}$ and $\{t_k\}$, we can derive
\begin{align*}
s_k - t_k
    &= (t^* - t_k) + \frac{1}{h'(t_k)} [h(t^*) - h(t_k) - h'(t_k)(t^* - t_k)] \\
    &= (t^* - t_k) + \frac{1}{h'(t_k)} \int_0^1 [h'(t_k+\tau(t^*-t_k)) - h'(t_k)](t^* - t_k) \dif{\tau}.
\end{align*}
In view of $h'$ is increasing in $[0,r_0)$, one has $h'(t_k) \leq h'(t_k + \tau(t^* - t_k)) \leq h'(t^*)$ for any $\tau \in [0,1]$.
Since $h'$ is convex in $[0, r_0)$ and satisfies $-1 \leq h'(t) < 0$ for any $t \in [0, r_0)$,
it follows from Lemma \ref{lem:ConvexProperty} that
\begin{align*}
s_k - t_k
    &\geq (t^* - t_k) + \frac{1}{h'(t^*)} \int_0^1 \frac{h'(t^*) - h'(t_k)}{t^* - t_k} \tau \dif{\tau} \cdot (t^* - t_k)^2 \\
    &\geq (t^* - t_k) + \frac{h''(t^*)}{2 h'(t^*)}(t^* - t_k)^2,
\end{align*}
which verifies the inequality in the lemma.
\end{proof}

\begin{mylemma}
Let $\{s_k\}$ and $\{t_k\}$ be the sequences generated by \eqref{it:TwoStepNM_Scalar}.
Suppose that $0 < \beta \leq b$. Then the sequence $\{t_k\}$ converges Q-cubic to $t^*$ as follows:
\begin{equation}
\label{ineq:t*-tk+1}
t^* - t_{k+1} \leq \frac{1}{2} \left(\frac{h''(t^*)}{h'(t^*)}\right)^2 (t^* - t_k)^3, \quad k \geq 0.
\end{equation}
\end{mylemma}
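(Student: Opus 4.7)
The plan is to turn $t^{*}-t_{k+1}$ into a double integral, bound it using only the monotonicity of $h''=L$ and $h'$ (no extra hypothesis beyond $0<\beta\le b$ is needed), and then combine with an analogous Newton-step bound for $t^{*}-s_k$ to extract the cubic factor.

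First I would exploit $h(t^{*})=0$ and the iteration $t_{k+1}=s_k-h(s_k)/h'(t_k)$ to write
$$
t^{*}-t_{k+1}=-\frac{1}{h'(t_k)}\bigl[(h(t^{*})-h(s_k))-h'(t_k)(t^{*}-s_k)\bigr]=-\frac{1}{h'(t_k)}\int_{s_k}^{t^{*}}\bigl[h'(u)-h'(t_k)\bigr]\,du.
$$
Since $h'(u)-h'(t_k)=\int_{t_k}^{u}L(v)\,dv$ and $L$ is nondecreasing on $[0,r_0)$ with $u\le t^{*}$, each integrand is bounded by $h''(t^{*})(u-t_k)$. Integrating over $[s_k,t^{*}]$ and using $s_k\le t^{*}$ from Lemma~\ref{lem:TwoStepNM_Scalar_Conv},
$$
\int_{s_k}^{t^{*}}(u-t_k)\,du=\tfrac12(t^{*}-s_k)(t^{*}+s_k-2t_k)\le (t^{*}-s_k)(t^{*}-t_k).
$$

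Next I would run the identical argument one step earlier. From $s_k=t_k-h(t_k)/h'(t_k)$ and $h(t^{*})=0$,
$$
t^{*}-s_k=-\frac{1}{h'(t_k)}\int_{t_k}^{t^{*}}\bigl[h'(u)-h'(t_k)\bigr]\,du\le -\frac{h''(t^{*})}{2h'(t_k)}(t^{*}-t_k)^{2}.
$$
Chaining the two estimates,
$$
t^{*}-t_{k+1}\le -\frac{h''(t^{*})}{h'(t_k)}(t^{*}-s_k)(t^{*}-t_k)\le \frac{\bigl(h''(t^{*})\bigr)^{2}}{2\bigl(h'(t_k)\bigr)^{2}}(t^{*}-t_k)^{3}.
$$
Finally, since $h'$ is increasing on $[0,r_0)$ and $h'(t)<0$ there, the ordering $t_k\le t^{*}<r_0$ gives $|h'(t_k)|\ge |h'(t^{*})|$, so $1/h'(t_k)^{2}\le 1/h'(t^{*})^{2}$, and \eqref{ineq:t*-tk+1} follows.

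The main obstacle is assembling the cubic factor with the correct constant. Two factors of $(t^{*}-t_k)$ must come from the Newton-type bound on $t^{*}-s_k$, while the third comes from the outer integration range $[s_k,t^{*}]$; the tempting simplification $t^{*}+s_k-2t_k\le 2(t^{*}-t_k)$ is just sharp enough. Equally delicate is the sign/size swap at the end: the natural estimate produces $1/h'(t_k)^{2}$, which is \emph{smaller} than $1/h'(t^{*})^{2}$ because $h'$ is negative and increasing, so the replacement goes in the right direction and yields the constant $\tfrac12\bigl(h''(t^{*})/h'(t^{*})\bigr)^{2}$ advertised in the statement.
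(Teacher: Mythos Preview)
Your proof is correct and follows essentially the same route as the paper: both write $t^{*}-t_{k+1}$ and $t^{*}-s_k$ as integrals of $h'(u)-h'(t_k)$ against $-1/h'(t_k)$, bound the integrand by $h''(t^{*})(u-t_k)$, and chain the two estimates. The only cosmetic differences are that the paper phrases the integrand bound via the secant inequality for the convex function $h'$ (Lemma~\ref{lem:ConvexProperty}) rather than by direct monotonicity of $L$, and that the paper swaps $h'(t_k)$ for $h'(t^{*})$ in each intermediate step whereas you do it once at the end; neither changes the argument or the constant.
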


\begin{proof}
By \eqref{it:TwoStepNM_Scalar}, we may derive the following relation
\begin{align*}
t^* - t_{k+1}
    &= t^* - s_k + \frac{h(s_k)}{h'(t_k)} \\
    &= -  \frac{1}{h'(t_k)} \cdot [h(t^*) - h(s_k) - h'(t_k)(t^* - s_k)] \\
    &= -  \frac{1}{h'(t_k)} \int_0^1 [h'(s_k + \tau(t^* - s_k)) - h'(t_k)](t^* - s_k) \dif{\tau}.
\end{align*}
We then will apply the preceding relation to obtain the estimate \eqref{ineq:t*-tk+1}.
Using  \eqref{it:TwoStepNM_Scalar} again, we deduce that
\begin{align*}
t^* - s_k
    &= -  \frac{1}{h'(t_k)} \int_0^1 [h'(t_k + \tau(t^* - t_k)) - h'(t_k)](t^* - t_k) \dif{\tau}.
\end{align*}
Taking into account the convexity of $h'$ in $[0,r_0)$, it follows from Lemma \ref{lem:ConvexProperty} that, for any $\tau \in (0,1]$,
\begin{align*}
h'(t_k + \tau(t^* - t_k)) - h'(t_k)
    &= \frac{h'(t_k + \tau(t^* - t_k)) - h'(t_k)}{\tau(t^* - t_k)} \cdot \tau (t^* - t_k) \\
    &\leq \frac{h'(t^*) - h'(t_k)}{t^* - t_k} \cdot \tau (t^* - t_k).
\end{align*}
Then, in view of the positivity of $-1/h'(t)$, one has from Lemma \ref{lem:ConvexProperty} again that
\begin{equation}
\label{ineq:t*-sk}
t^* - s_k
    \leq -  \frac{1}{h'(t_k)} \int_0^1 \frac{h'(t^*) - h'(t_k)}{t^* - t_k} \tau \dif{\tau} \cdot (t^* - t_k)^2
    \leq - \frac{1}{2} \frac{h''(t^*)}{h'(t^*)} \cdot (t^* - t_k)^2.
\end{equation}
the last due to $h'$ is strictly increasing. On the other hand, thanks to Lemma \ref{lem:ConvexProperty} again
and note that $s_k - t_k + \tau(t^* - s_k) \leq t^* - t_k$ holds for any $\tau \in [0,1]$, we have
\begin{align*}
h'(s_k + \tau(t^* - s_k)) - h'(t_k)
    &\leq \frac{h'(t^*) - h'(t_k)}{t^* - t_k} \cdot [s_k - t_k + \tau(t^* - s_k)] \\
    &\leq h''(t^*)(t^* - t_k).
\end{align*}
This together with \eqref{ineq:t*-sk} allows us to conclude that
$$
t^* - t_{k+1}
    \leq - \frac{h''(t^*)}{h'(t^*)} (t^* - t_k) (t^* - s_k)
    \leq \frac{1}{2} \left(\frac{h''(t^*)}{h'(t^*)}\right)^2 (t^* - t_k)^3,
$$
and the bound claimed in the lemma follows.
\end{proof}

The following lemmas, which provide clear relationships between the majorizing function
and the nonlinear operator, will play key roles for the semilocal convergence analysis
of the two-step Newton method \eqref{it:TwoStepNM}.

\begin{mylemma}
\label{lem:F'(x)-1_invertibility}
Suppose $\|x - x_0\| \leq t < t^*$. If $F'$ satisfies the $L$-average Lipschitz condition \eqref{cond:LAverageLipCond}
on $\ball(x^*, t)$, then $F'(x)$ is nonsingular and
\begin{equation}
\label{ineq:norm_F'(x)-1F'(x0)}
\|F'(x)^{-1}F'(x_0)\| \leq - \frac{1}{h'(\|x - x_0\|)} \leq - \frac{1}{h'(t)}.
\end{equation}
In particular, $F'$ is nonsingular in $\ball(x_0, t^*)$.
\end{mylemma}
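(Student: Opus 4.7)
The plan is to reduce the claim to a standard Banach--perturbation argument applied to the operator $\I - F'(x_0)^{-1}F'(x)$, using the majorizing function to control the perturbation size.

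First I would write
\[
F'(x_0)^{-1}F'(x) = \I - \bigl(\I - F'(x_0)^{-1}F'(x)\bigr)
    = \I + F'(x_0)^{-1}\bigl[F'(x) - F'(x_0)\bigr],
\]
and estimate the second summand. Applying the $L$-average Lipschitz condition \eqref{cond:LAverageLipCond} with the two points $x_0$ and $x$ (which is legitimate because $\|x_0 - x_0\| + \|x - x_0\| = \|x - x_0\| \leq t < t^*$, and $t^* < r_0$ so the integration interval sits inside the domain of $L$), I obtain
\[
\bigl\|F'(x_0)^{-1}[F'(x) - F'(x_0)]\bigr\|
    \leq \int_0^{\|x - x_0\|} L(u)\dif{u}
    = h'(\|x - x_0\|) - h'(0)
    = h'(\|x - x_0\|) + 1,
\]
where I used the identity $\int_s^t L(u)\dif{u} = h'(t) - h'(s)$ recorded in Section~2.

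Next I would note that by the definition \eqref{cons:r0} of $r_0$ one has $h'(r_0) = 0$, and since $h'$ is strictly increasing on $[0, r_0)$ and $\|x - x_0\| \leq t < t^* < r_0$, it follows that $h'(\|x - x_0\|) < 0$ and in particular $h'(\|x - x_0\|) + 1 < 1$. The Banach lemma then yields invertibility of $F'(x_0)^{-1}F'(x)$ (hence of $F'(x)$), together with
\[
\|F'(x)^{-1} F'(x_0)\|
    \leq \frac{1}{1 - \bigl(h'(\|x - x_0\|) + 1\bigr)}
    = - \frac{1}{h'(\|x - x_0\|)}.
\]
The second inequality in \eqref{ineq:norm_F'(x)-1F'(x0)} is immediate from monotonicity: since $h'(\|x - x_0\|) \leq h'(t) < 0$, dividing by these negative numbers reverses the inequality and gives $-1/h'(\|x - x_0\|) \leq -1/h'(t)$. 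The final assertion that $F'$ is nonsingular on $\ball(x_0, t^*)$ follows by applying what we just proved to $t = \|x - x_0\|$ for each $x \in \ball(x_0, t^*)$.

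There is no real obstacle here; the only point requiring a little care is verifying that the $L$-average Lipschitz hypothesis is genuinely applicable, i.e.\ that $\|x - x_0\| < t^* < r_0$ keeps the integration within $[0, r_0)$ where $h'$ controls the perturbation, and that the sign information $h'(\|x - x_0\|) < 0$ (which gives $\|\I - F'(x_0)^{-1}F'(x)\| < 1$) is what makes the Banach lemma go through.
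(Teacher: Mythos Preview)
Your proposal is correct and follows essentially the same argument as the paper: bound $\|\I - F'(x_0)^{-1}F'(x)\|$ via the $L$-average Lipschitz condition, rewrite the bound as $h'(\|x-x_0\|)+1<1$ using $h'(0)=-1$, and invoke the Banach lemma to obtain both invertibility and \eqref{ineq:norm_F'(x)-1F'(x0)}. Your write-up is in fact slightly more careful than the paper's in verifying that $\|x-x_0\|<t^*<r_0$ keeps the integration within the domain of $L$ and in spelling out the monotonicity step for the second inequality.
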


\begin{proof}
Take $x \in \overline{\ball(x_0, t)}, 0 \leq t < t^*$. By using the $L$-average Lipschitz condition \eqref{cond:LAverageLipCond},
we have
\begin{align*}
\|F'(x_0)^{-1} F'(x) - \I\|
     \leq \int_0^{\|x - x_0\|} L(u) \dif{u}
     = h'(\|x - x_0\|) - h'(0).
\end{align*}
Since $h'(0) = -1$ and $h'$ is strictly increasing in $(0,t^*)$,
we obtain
$$
\|F'(x_0)^{-1} F'(x) - \I\| \leq h'(t) + 1 < 1,
$$
the last due to $-1 < h'(t) < 0$ for any $t \in (0, t^*)$.
Therefore, the Banach lemma is applicable to conclude that $F'(x_0)^{-1} F'(x)$ is nonsingular and \eqref{ineq:norm_F'(x)-1F'(x0)}
holds. The proof is complete.
\end{proof}

\begin{mylemma}
\label{lem:AuxiliaryLemma1_Semilocal}
Let $\{s_k\}$ and $\{t_k\}$ be generated by \eqref{it:TwoStepNM_Scalar}. Assume that $F'$ satisfies
the $L$-average Lipschitz condition \eqref{cond:LAverageLipCond} on $\ball(x_0, t^*)$.
If $0 < \beta \leq b$, then the sequences $\{x_k\}$
and $\{y_k\}$ generated by the two-step Newton method \eqref{it:TwoStepNM}
with initial point $x_0$ are well-defined and contained in $\ball(x_0, t^*)$.
Moreover, for all $k = 0,1,2,\ldots$, we have
\begin{itemize}[font=\upshape]
\item[(i)]
$F'(x_k)^{-1}$ exists and $\|F'(x_k)^{-1}F'(x_0)\| \leq -1/h'(\|x_k - x_0\|) \leq -1/h'(t_k)$.
\item[(ii)]
$\|F'(x_0)^{-1}F(x_k)\| \leq h(t_k)$.
\item[(iii)]
$\|y_k - x_k\| \leq s_k - t_k$.
\item[(iv)]
$\|x_{k+1} - y_k\| \leq (t_{k+1} - s_k) \cdot \left(\cfrac{\|y_k - x_k\|}{s_k - t_k}\right)^2 \leq t_{k+1} - s_k$.
\item[(v)]
$\|x_{k+1} - x_k\| \leq t_{k+1} - t_k$.
\end{itemize}
\end{mylemma}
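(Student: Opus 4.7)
The plan is to establish (i)--(v) simultaneously by strong induction on $k$, processing them in the order (i), (ii), (iii), (iv), (v) at each step. The base case $k=0$ is direct: (i) reads $\|\I\| = 1 = -1/h'(0)$, (ii) reads $\beta = h(0)$, (iii) is the definition of $y_0$, and (iv)--(v) reduce to the general computation below with $\lambda := \|y_0 - x_0\|/(s_0 - t_0) = 1$. In the inductive step, assuming (i)--(v) for all indices $0,\ldots,k-1$, the triangle inequality summed over the (v)'s gives $\|x_k - x_0\| \leq t_k < t^*$, so Lemma~\ref{lem:F'(x)-1_invertibility} yields (i) at $k$.

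The heart of the argument is two integral bounds, on $\|F'(x_0)^{-1}F(y_{k-1})\|$ and $\|F'(x_0)^{-1}F(x_k)\|$, used respectively to close (iv) at step $k-1$ and to prove (ii) at step $k$. Both begin from the Newton-style cancellations $F(x_{k-1}) + F'(x_{k-1})(y_{k-1} - x_{k-1}) = 0$ and $F(y_{k-1}) + F'(x_{k-1})(x_k - y_{k-1}) = 0$ built into the iteration, from which
\[
F(y_{k-1}) = \int_0^1 \bigl[F'(x_{k-1} + \tau(y_{k-1}-x_{k-1})) - F'(x_{k-1})\bigr](y_{k-1}-x_{k-1})\,\dif\tau
\]
and an analogous expression for $F(x_k)$ along the segment $[y_{k-1}, x_k]$ follow. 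Applying the $L$-average Lipschitz condition inside each integral (valid because the induction keeps $\|x - x_0\| + \|y - x\| \leq t_k < t^*$), and then invoking monotonicity of $(a,b)\mapsto \int_a^{a+b}L(u)\,\dif u$ in both arguments to replace the vector radii by their scalar majorants $t_{k-1}$, $s_{k-1}-t_{k-1}$ and $t_k - s_{k-1}$, the resulting scalar integrals telescope via $h(t_{k-1}) + h'(t_{k-1})(s_{k-1}-t_{k-1}) = 0$ and $h(s_{k-1}) + h'(t_{k-1})(t_k - s_{k-1}) = 0$ down to $h(s_{k-1})$ and $h(t_k)$. Multiplying by the $-1/h'(t_k)$ bound from (i) then gives (iii), the crude form of (iv), and, via the triangle inequality, (v).

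The delicate point is the squared refinement in (iv), namely the factor $(\|y_k-x_k\|/(s_k-t_k))^2$. Setting $\lambda := \|y_k-x_k\|/(s_k-t_k) \in (0,1]$ and substituting $\sigma = \tau\lambda$ in the integral representation of $F(y_k)$, the inner integrand becomes $\eta(\sigma) := h'(t_k + \sigma(s_k-t_k)) - h'(t_k)$, which is convex in $\sigma$ with $\eta(0) = 0$ because $h'$ itself is convex (since $L$ is nondecreasing). Convexity together with $\eta(0)=0$ yields $\eta(\lambda\sigma) \leq \lambda\,\eta(\sigma)$ for $\lambda \in [0,1]$, and integrating over $[0,1]$ together with the identity $(s_k - t_k)\int_0^1 \eta(\sigma)\,\dif\sigma = h(s_k)$ produces $\|F'(x_0)^{-1}F(y_k)\| \leq \lambda^2\, h(s_k)$. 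Multiplying by $-1/h'(t_k)$ delivers $\|x_{k+1}-y_k\| \leq \lambda^2\,(t_{k+1} - s_k)$.

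The main obstacle will be precisely this refined bound. A naive application of the $L$-average Lipschitz condition discards the ratio $\lambda$ and yields only the linear estimate $\|x_{k+1}-y_k\| \leq t_{k+1} - s_k$, which suffices for Q-superquadratic but not for Q-cubic convergence in Theorem~\ref{th:SemilocalConv}. Producing the factor $\lambda^2$ requires both convexity of $h'$ (strictly more than monotonicity of $L$) and the exact rescaling $\sigma = \tau\lambda$ inside the inner integral; no purely algebraic manipulation of the crude estimate recovers it. All remaining steps reduce to bookkeeping with the monotonicity of $L$ and the scalar identities from Section~2.
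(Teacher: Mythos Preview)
Your proposal is correct and follows essentially the same route as the paper's proof: induction on $k$, the Newton-style integral remainders for $F(y_k)$ and $F(x_{k+1})$, the $L$-average Lipschitz condition, and convexity of $h'$ to extract the squared ratio in (iv). The only cosmetic differences are that the paper packages the convexity steps through its Lemma~\ref{lem:ConvexProperty} (the inequality $[f(v)-f(\theta v)]/v$ monotone in $v$), whereas you phrase the same facts as monotonicity of $(a,b)\mapsto\int_a^{a+b}L(u)\,\dif u$ and as $\eta(\lambda\sigma)\le\lambda\,\eta(\sigma)$ for convex $\eta$ with $\eta(0)=0$; these are equivalent. One small slip of wording: convexity of $h'$ \emph{is} exactly monotonicity of $L$ (since $h''=L$), so ``strictly more than monotonicity of $L$'' should read ``strictly more than positivity of $L$'' (equivalently, more than mere monotonicity of $h'$).
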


\begin{proof}
We reason by induction. The case $k = 0$ is true obviously for (i)-(iii).
Thus $y_0 \in \ball(x_0, t^*)$ owing to $\|y_0 - x_0\| \leq s_0 - t_0 = s_0 < t^*$. As for (iv) and (v), by \eqref{it:TwoStepNM}, we have
\begin{align*}
F(y_0) &= F(y_0) - F(x_0) - F'(x_0)(y_0 - x_0) \\
&= \int_0^1 [F'(x_0 + \tau(y_0 - x_0)) - F'(x_0)](y_0 - x_0) \dif{\tau}.
\end{align*}
Then, the $L$-average Lipschitz condition \eqref{cond:LAverageLipCond} is applicable to deduce that
\begin{align*}
\|F'(x_0)^{-1}F(y_0)\| &\leq \int_0^1 \|F'(x_0)^{-1}[F'(x_0 + \tau(y_0 - x_0)) - F'(x_0)]\|\|y_0 - x_0\| \dif{\tau} \\
&\leq \int_0^1 \left(\int_0^{\tau\|y_0 - x_0\|} L(u) \dif{u}\right) \|y_0 - x_0\| \dif{\tau} \\
&= \int_0^1 [h'(\tau\|y_0 - x_0\|) - h'(0)] \|y_0 - x_0\| \dif{\tau}.
\end{align*}
In view of $h'$ is strictly convex in $[0,r_0)$ and noting that $\|y_0 - x_0\| \leq s_0 - t_0$ by (iii),
it follows from Lemma \ref{lem:ConvexProperty} that
\begin{align*}
h'(\tau\|y_0 - x_0\|) - h'(0)
    &= \frac{h'(\tau\|y_0 - x_0\|) - h'(0)}{\|y_0 - x_0\|} \cdot \|y_0 - x_0\| \\
    &\leq \frac{h'(\tau(s_0 - t_0)) - h'(0)}{s_0 - t_0} \cdot \|y_0 - x_0\|.
\end{align*}
Then, combining the above inequality and \eqref{it:TwoStepNM_Scalar}, one has that
\begin{align*}
\|F'(x_0)^{-1}F(y_0)\|
&\leq \int_0^1 [h'(\tau s_0) - h'(0)]s_0 \dif{\tau}\cdot \left(\frac{\|y_0 - x_0\|}{s_0 - t_0}\right)^2 \\
&= h(s_0) \cdot \left(\frac{\|y_0 - x_0\|}{s_0 - t_0}\right)^2
    = (t_1 - s_0) \cdot \left(\frac{\|y_0 - x_0\|}{s_0 - t_0}\right)^2.
\end{align*}
This leads to
$$
\|x_1 - y_0\| = \|F'(x_0)^{-1}F(y_0)\| \leq (t_1 - s_0) \cdot \left(\frac{\|y_0 - x_0\|}{s_0 - t_0}\right)^2.
$$
Hence, we have
$$
\|x_1 - x_0\| \leq \|x_1 - y_0\| + \|y_0 - x_0\| \leq (t_1 - s_0) + (s_0 - t_0) = t_1 - t_0.
$$
That is to say, (iv) and (v) hold for the case $k = 0$, which implies that $x_1 \in \ball(x_0,t^*)$.
Now we assume that $x_k, y_k \in \ball(x_0,t^*)$, $\|x_k - x_0\| \leq t_k$ and (i)-(v) hold for some $k \geq 0$.
Then, applying the inductive hypothesis (iii) and Lemma \ref{lem:TwoStepNM_Scalar_Conv}, we obtain that
$\|y_k - x_0\| \leq \|y_k - x_k\| + \|x_k - x_0\| \leq s_k$.
In addition, we use the inductive hypothesis (v) and
Lemma \ref{lem:TwoStepNM_Scalar_Conv} to yield
$$
\|x_{k+1} - x_0\| \leq \sum_{i = 0}^k \|x_{i+1} - x_i\| \leq \sum_{i = 0}^k (t_{i+1} - t_i) = t_{k+1} < t^*,
$$
which implies that $x_{k+1} \in \ball(x_0, t^*)$.
This together with \eqref{ineq:norm_F'(x)-1F'(x0)} gives that (i) holds for the case $k+1$.
For (ii), by \eqref{it:TwoStepNM} again, we have the following identity:
\begin{align*}
F(x_{k+1}) &= F(x_{k+1}) - F(y_k) - F'(x_k)(x_{k+1} - y_k) \\
&= \int_0^1 [F'(y_k + \tau(x_{k+1} - y_k)) - F'(x_k)](x_{k+1} - y_k) \dif{\tau}.
\end{align*}
It follows from the $L$-average Lipschitz condition \eqref{cond:LAverageLipCond} that
\begin{align*}
\|F'(x_0)^{-1}F(x_{k+1})\|
    &\leq \int_0^1 \|F'(x_0)^{-1}[F'(y_k + \tau(x_{k+1} - y_k)) - F'(x_k)]\|\|x_{k+1} - y_k\| \dif{\tau} \\
    &\leq \int_0^1 \left(\int_{\|x_k-x_0\|}^{\|x_k - x_0\| + \|y_k - x_k + \tau(x_{k+1} - y_k)\|} L(u) \dif{u}\right) \|x_{k+1} - y_k\| \dif{\tau}.
\end{align*}
In view of $h'$ is increasing and convex in $[0,r_0)$,
by applying Lemma \ref{lem:ConvexProperty} and the inductive hypotheses (iii)-(iv), one has that
\begin{align*}
\lefteqn{\int_{\|x_k-x_0\|}^{\|x_k - x_0\| + \|y_k - x_k + \tau(x_{k+1} - y_k)\|} L(u) \dif{u}} \\
    &= h'(\|x_k - x_0\| + \|y_k - x_k + \tau(x_{k+1} - y_k)\|) - h'(\|x_k-x_0\|) \\
    &\leq h'(\|x_k - x_0\| + \|y_k - x_k\| + \tau\|x_{k+1} - y_k\|) - h'(\|x_k-x_0\|) \\
    &\leq \frac{h'(s_k + \tau(t_{k+1}-s_k)) - h'(t_k)}{s_k - t_k + \tau(s_k - t_k)}\cdot (\|y_k - x_k\| + \tau\|x_{k+1} - y_k\|) \\
    &\leq h'(s_k + \tau(t_{k+1}-s_k)) - h'(t_k).
\end{align*}
This allows us to get
\begin{align}
\|F'(x_0)^{-1}F(x_{k+1})\|
    &\leq \int_0^1[h'(s_k + \tau(t_{k+1}-s_k)) - h'(t_k)]\|x_{k+1} - y_k\| \dif{\tau} \nonumber\\
    &= [h(t_{k+1}) - h(s_k) - h'(t_k)(t_{k+1} - s_k)] \cdot \frac{\|x_{k+1} - y_k\|}{t_{k+1} - s_k} \nonumber\\
    &= h(t_{k+1}) \cdot \frac{\|x_{k+1} - y_k\|}{t_{k+1} - s_k} \nonumber\\
    &\leq h(t_{k+1}), \label{ineq:norm_F(xk+1)}
\end{align}
which shows that (ii) holds for the case $k+1$. Combining \eqref{ineq:norm_F'(x)-1F'(x0)} and \eqref{ineq:norm_F(xk+1)},
we further obtain that
\begin{align}
\|y_{k+1} - x_{k+1}\|
    &= \|F'(x_{k+1})^{-1}F(x_{k+1})\| \nonumber\\
    &\leq \|F'(x_{k+1})^{-1}F'(x_0)\|\|F'(x_0)^{-1}F(x_{k+1})\| \nonumber\\
    &\leq -\frac{h(t_{k+1})}{h'(t_{k+1})}
        = s_{k+1} - t_{k+1}. \label{ineq:norm_yk+1-xk+1}
\end{align}
This means that (iii) holds for the case $k+1$. Then, we conclude that
$\|y_{k+1} - x_0\| \leq \|y_{k+1} - x_{k+1}\| + \|x_{k+1} - x_0\| \leq s_{k+1} < t^*$ and so
$y_{k+1} \in \ball(x_0, t^*)$. As for (iv), noting that
\begin{align*}
x_{k+2} - y_{k+1}
    &= - F'(x_{k+1})^{-1} F(y_{k+1}) \\
    &= - F'(x_{k+1})^{-1}[F(y_{k+1}) - F(x_{k+1}) - F'(x_{k+1})(y_{k+1} - x_{k+1})] \\
    &= - F'(x_{k+1})^{-1} \int_0^1 [F'(x_{k+1}^\tau) - F'(x_{k+1})] (y_{k+1} - x_{k+1}) \dif{\tau},
\end{align*}
where $x_{k+1}^\tau := x_{k+1}+\tau(y_{k+1} - x_{k+1})$, by using \eqref{ineq:norm_F'(x)-1F'(x0)},
the $L$-average Lipschitz condition \eqref{cond:LAverageLipCond}, we have
\begin{align*}
\|x_{k+2} - y_{k+1}\|
    &\leq - \frac{1}{h'(t_{k+1})}
        \int_0^1 \left[\int_{\|x_{k+1} - x_0\|}^{\|x_{k+1} - x_0\|+\tau\|y_{k+1}-x_{k+1}\|} L(u) \dif{u}\right] \|y_{k+1}-x_{k+1}\| \dif{\tau}.
\end{align*}
Taking into account that $h'$ is increasing and convex in $[0,r_0)$ again,
by combining \eqref{ineq:norm_yk+1-xk+1} with Lemma \ref{lem:ConvexProperty}, one gets that
\begin{align*}
\lefteqn{\int_{\|x_{k+1} - x_0\|}^{\|x_{k+1} - x_0\|+\tau\|y_{k+1}-x_{k+1}\|} L(u) \dif{u}} \\
    &= h'(\|x_{k+1} - x_0\|+\tau\|y_{k+1}-x_{k+1}\|) - h'(\|x_{k+1} - x_0\|) \\
    &= \frac{h'(\|x_{k+1} - x_0\|+\tau\|y_{k+1}-x_{k+1}\|) - h'(\|x_{k+1} - x_0\|)}{\|y_{k+1}-x_{k+1}\|} \cdot \|y_{k+1}-x_{k+1}\| \\
    &\leq  \frac{h'(t_{k+1}+\tau(s_{k+1}-t_{k+1})) - h'(t_{k+1})}{s_{k+1}-t_{k+1}} \cdot \|y_{k+1}-x_{k+1}\|.
\end{align*}
This permits us to arrive at
\begin{align*}
\lefteqn{\|x_{k+2} - y_{k+1}\|} \\
    &\leq - \frac{1}{h'(t_{k+1})} \int_0^1 [h'(t_{k+1}+\tau(s_{k+1}-t_{k+1})) - h'(t_{k+1})] \cdot \frac{\|y_{k+1}-x_{k+1}\|^2}{s_{k+1}-t_{k+1}} \dif{\tau} \\
    &= - \frac{1}{h'(t_{k+1})}[h(s_{k+1}) - h(t_{k+1}) - h'(t_{k+1})(s_{k+1}-t_{k+1})] \cdot \left(\frac{\|y_{k+1}-x_{k+1}\|}{s_{k+1}-t_{k+1}}\right)^2 \\
    &= (t_{k+2} - s_{k+1}) \cdot \left(\frac{\|y_{k+1}-x_{k+1}\|}{s_{k+1}-t_{k+1}}\right)^2.
\end{align*}
Furthermore, we derive from this together with \eqref{ineq:norm_yk+1-xk+1} that
\begin{equation*}
\|x_{k+2} - x_{k+1}\| \leq \|x_{k+2} - y_{k+1}\| + \|y_{k+1} - x_{k+1}\| \leq t_{k+2} - t_{k+1}.
\end{equation*}
Therefore, all the statements in the lemma hold by induction. This completes the proof.
\end{proof}

\begin{mylemma}
\label{lem:AuxiliaryLemma2_Semilocal}
Under the same assumptions of Lemma $\ref{lem:AuxiliaryLemma1_Semilocal}$.
Then, the sequence $\{x_k\}$ converges to a point $x^* \in \overline{\ball(x_0, t^*)}$ with $F(x^*) = 0$.
Moreover, we have
\begin{equation}
\label{ineq:norm_x*-xk}
\|x^* - x_k\| \leq t^* - t_k, \quad k \geq 0,
\end{equation}
and
\begin{equation}
\label{ineq:norm_x*-yk}
\|x^* - y_k\| \leq (t^* - s_k) \left(\frac{\|x^*-x_k\|}{t^*-t_k}\right)^2, \quad k \geq 0.
\end{equation}
\end{mylemma}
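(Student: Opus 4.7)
The plan is to break the proof into two parts: first establishing convergence of $\{x_k\}$ to a zero $x^*$ of $F$ lying in $\overline{\ball(x_0,t^*)}$, and then deriving the sharp estimate for $\|x^*-y_k\|$.

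First, I would observe that Lemma \ref{lem:AuxiliaryLemma1_Semilocal}(v) gives a telescoping bound
$$\|x_j - x_k\| \leq \sum_{i=k}^{j-1}\|x_{i+1}-x_i\| \leq \sum_{i=k}^{j-1}(t_{i+1}-t_i) = t_j - t_k$$
for all $j>k\geq 0$. Since by Lemma \ref{lem:TwoStepNM_Scalar_Conv} the sequence $\{t_k\}$ converges to $t^*$ and is therefore Cauchy, $\{x_k\}$ is Cauchy in $\XS$, hence converges to some $x^*\in\XS$. Letting $j\to\infty$ in the above inequality yields \eqref{ineq:norm_x*-xk}, which in particular places $x^*\in\overline{\ball(x_0,t^*)}$. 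To see $F(x^*)=0$, I would invoke Lemma \ref{lem:AuxiliaryLemma1_Semilocal}(ii): $\|F'(x_0)^{-1}F(x_k)\|\leq h(t_k)\to h(t^*)=0$, so $F(x_k)\to 0$, and continuity of $F$ combined with $x_k\to x^*$ gives $F(x^*)=0$.

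For the second estimate, I would start from the identity $x^*-y_k = F'(x_k)^{-1}\bigl[F'(x_k)(x^*-x_k)+F(x_k)\bigr]$ and use $F(x^*)=0$ together with the fundamental theorem of calculus to write
$$F'(x_k)(x^*-x_k)+F(x_k) = -\int_0^1 [F'(x_k+\tau(x^*-x_k))-F'(x_k)](x^*-x_k)\dif\tau.$$
Applying the $L$-average Lipschitz condition \eqref{cond:LAverageLipCond} with $\|x_k-x_0\|\leq t_k$ and $\|x^*-x_k\|\leq t^*-t_k$, the norm of this expression (after premultiplying by $F'(x_0)^{-1}$) is bounded by
$$\int_0^1 \bigl[h'(\|x_k-x_0\|+\tau\|x^*-x_k\|)-h'(\|x_k-x_0\|)\bigr]\|x^*-x_k\|\dif\tau.$$

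The key step, which I expect to be the main technical obstacle, is to convert this into the quadratic ratio $(\|x^*-x_k\|/(t^*-t_k))^2$ that appears in \eqref{ineq:norm_x*-yk}. I would exploit two monotonicity properties of $h'$ on $[0,r_0)$: since $h''=L\geq 0$ is nondecreasing, the increment $h'(a+\tau s)-h'(a)$ is nondecreasing in $a$ (allowing me to replace $\|x_k-x_0\|$ by $t_k$), and the ratio $[h'(a+\tau s)-h'(a)]/s$ is nondecreasing in $s$ — a standard consequence of convexity captured by Lemma \ref{lem:ConvexProperty}(ii). These two substitutions yield
$$\bigl[h'(\|x_k-x_0\|+\tau\|x^*-x_k\|)-h'(\|x_k-x_0\|)\bigr]\|x^*-x_k\|\leq \bigl[h'(t_k+\tau(t^*-t_k))-h'(t_k)\bigr](t^*-t_k)\cdot\Bigl(\tfrac{\|x^*-x_k\|}{t^*-t_k}\Bigr)^2.$$
Integrating over $\tau\in[0,1]$ and recognizing the scalar identity
$(t^*-s_k)(-h'(t_k)) = \int_0^1 [h'(t_k+\tau(t^*-t_k))-h'(t_k)](t^*-t_k)\dif\tau$ (which follows directly from \eqref{it:TwoStepNM_Scalar} and $h(t^*)=0$), I obtain
$$\|F'(x_0)^{-1}[F'(x_k)(x^*-x_k)+F(x_k)]\|\leq (t^*-s_k)(-h'(t_k))\Bigl(\tfrac{\|x^*-x_k\|}{t^*-t_k}\Bigr)^2.$$
Finally multiplying by $\|F'(x_k)^{-1}F'(x_0)\|\leq -1/h'(t_k)$ from Lemma \ref{lem:AuxiliaryLemma1_Semilocal}(i) cancels the factor $-h'(t_k)$ and delivers \eqref{ineq:norm_x*-yk}, completing the proof.
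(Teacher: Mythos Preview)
Your proposal is correct and follows essentially the same approach as the paper: telescoping via Lemma~\ref{lem:AuxiliaryLemma1_Semilocal}(v) to obtain convergence and \eqref{ineq:norm_x*-xk}, then the integral identity for $x^*-y_k$ combined with the $L$-average Lipschitz condition and the monotonicity/convexity of $h'$ to reach \eqref{ineq:norm_x*-yk}. Your argument for $F(x^*)=0$ via Lemma~\ref{lem:AuxiliaryLemma1_Semilocal}(ii) is in fact slightly more direct than the paper's, which instead bounds $\|F(x_k)\|\leq \|F'(x_k)\|(s_k-t_k)$ using Lemma~\ref{lem:AuxiliaryLemma1_Semilocal}(iii) together with boundedness of $\{\|F'(x_k)\|\}$.
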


\begin{proof}
We apply Lemma \ref{lem:AuxiliaryLemma1_Semilocal} (v)
and Lemma \ref{lem:TwoStepNM_Scalar_Conv} to obtain that
$$
\sum_{k = N}^\infty \|x_{k+1} - x_k\| \leq \sum_{k = N}^\infty (t_{k+1} - t_k) = t^* - t_N < + \infty \quad \text{for any } N \in \NS.
$$
Thus, $\{x_k\}$ is a Cauchy sequence in $\ball(x_0, t^*)$ and so converges to some $x^* \in \overline{\ball(x_0, t^*)}$.
The above inequality also implies that $\|x^* - x_k\| \leq t^* - t_k$ for any $k \geq 0$. Next, we
show that $F(x^*) = 0$. It follows from Lemma \ref{lem:F'(x)-1_invertibility} that
$\{\|F'(x_k)\|\}$ is bounded. By Lemma \ref{lem:AuxiliaryLemma1_Semilocal}, we have
$$
\|F(x_k)\| \leq \|F'(x_k)\| \|F'(x_k)^{-1} F(x_k)\| \leq \|F'(x_k)\| (s_k - t_k).
$$
Letting $k \to \infty$, by noting the fact that $\{s_k\}$ and $\{t_k\}$ are converge to the same point $t^*$ (by Lemma \ref{lem:TwoStepNM_Scalar_Conv}), we get that $\lim\limits_{k \to \infty} F(x_k) = 0$. Since $F$ is
continuous in $\overline{\ball(x_0, t^*)}$, $\{x_k\} \subset \ball(x_0, t^*)$ and $\{x_k\}$ converges to $x^*$,
we also have $\lim\limits_{k \to \infty} F(x_k) = F(x^*)$, which verifies that $F(x^*) = 0$.
It remains to show the estimate \eqref{ineq:norm_x*-yk}.
Due to Lemma \ref{lem:AuxiliaryLemma1_Semilocal}, we have
\begin{equation}
\label{ineq:norm_yk-x0}
\|y_k - x_0\| \leq \|y_k - x_k\| + \|x_k - x_0\| \leq s_k.
\end{equation}
On the other hand, we can derive the following identity:
\begin{align*}
x^* - y_k = - F'(x_k)^{-1} \int_0^1 [F'(x_k + \tau(x^* - x_k)) - F'(x_k)](x^* - x_k) \dif{\tau}.
\end{align*}
Then, in view of $h'$ is increasing and convex in $[0,r_0)$,
by combining \eqref{ineq:norm_F'(x)-1F'(x0)}, the $L$-average Lipschitz condition \eqref{cond:LAverageLipCond}
and Lemma \ref{lem:ConvexProperty}, one gets that
\begin{align*}
\|x^* - y_k\|
    &\leq - \frac{1}{h'(t_k)} \int_0^1 \left(\int_{\|x_k-x_0\|}^{\|x_k-x_0\|+\tau\|x^*-x_k\|} L(u) \dif{u}\right) \|x^*-x_k\| \dif{\tau} \\
    &= - \frac{1}{h'(t_k)} \int_0^1[h'(\|x_k-x\|+\tau\|x^*-x_k\|) - h'(\|x_k-x_0\|)] \|x^*-x_k\| \dif{\tau} \\
    &\leq - \frac{1}{h'(t_k)} \int_0^1 \frac{h'(t_k+\tau(t^*-t_k)) - h'(t_k)}{t^*-t_k} \dif{\tau} \cdot \|x^*-x_k\|^2 \\
    &= (t^* - s_k) \cdot \left(\frac{\|x^* - x_k\|}{t^* - t_k}\right)^2,
\end{align*}
as claimed. The proof of this lemma is complete.
\end{proof}

\begin{mylemma}
Under the same assumptions of Lemma $\ref{lem:AuxiliaryLemma1_Semilocal}$ and the assumption that
$2 + t^*h''(t^*)/h'(t^*) > 0$, we have
\begin{equation}
\label{ineq:norm_yk-xk_x*-xk}
\frac{\|y_k - x_k\|}{s_k - t_k}
    \leq \frac{1 - \frac{h''(t^*)}{2 h'(t^*)} (t^*-t_k)}{1 + \frac{h''(t^*)}{2 h'(t^*)} (t^*-t_k)}
        \cdot \frac{\|x^* - x_k\|}{t^* - t_k}, \quad k \geq 0.
\end{equation}
\end{mylemma}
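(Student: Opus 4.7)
The plan is to obtain the bound by combining the crude triangle inequality $\|y_k-x_k\|\leq\|x^*-y_k\|+\|x^*-x_k\|$ with the sharp estimates already derived for each piece. Writing $H_*=h''(t^*)/h'(t^*)<0$ and $c_k:=\tfrac{H_*}{2}(t^*-t_k)$, the target can be rephrased as
$$
\frac{\|y_k-x_k\|}{s_k-t_k}\leq \frac{1-c_k}{1+c_k}\cdot\frac{\|x^*-x_k\|}{t^*-t_k}.
$$
Lemma \ref{lem:AuxiliaryLemma2_Semilocal} supplies $\|x^*-y_k\|\leq (t^*-s_k)\bigl(\|x^*-x_k\|/(t^*-t_k)\bigr)^{2}$, and the first technical lemma of this subsection gives $s_k-t_k\geq (t^*-t_k)(1+c_k)$, while the intermediate bound \eqref{ineq:t*-sk} obtained inside the proof of the second technical lemma gives $t^*-s_k\leq -c_k(t^*-t_k)$.

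The first step is to dispose of a positivity issue: since $(t^*-t_k)\leq t^*$ and $H_*<0$, the hypothesis $2+t^*H_*>0$ forces $1+c_k\geq 1+t^*H_*/2>0$, so dividing by $s_k-t_k$ and by $1+c_k$ is legitimate. Then I apply the triangle inequality and divide by $s_k-t_k$ to obtain
$$
\frac{\|y_k-x_k\|}{s_k-t_k}\leq \frac{\|x^*-x_k\|}{s_k-t_k}+\frac{t^*-s_k}{s_k-t_k}\cdot\frac{\|x^*-x_k\|^{2}}{(t^*-t_k)^{2}}.
$$
Into the first term I insert $s_k-t_k\geq (t^*-t_k)(1+c_k)$, and into the second I insert $(t^*-s_k)/(s_k-t_k)\leq -c_k/(1+c_k)$, which together give
$$
\frac{\|y_k-x_k\|}{s_k-t_k}\leq \frac{1}{1+c_k}\cdot\frac{\|x^*-x_k\|}{t^*-t_k}+\frac{-c_k}{1+c_k}\cdot\frac{\|x^*-x_k\|^{2}}{(t^*-t_k)^{2}}.
$$

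The final move uses the majorization $\|x^*-x_k\|\leq t^*-t_k$ from Lemma \ref{lem:AuxiliaryLemma2_Semilocal} to replace one factor of $\|x^*-x_k\|/(t^*-t_k)$ by $1$ in the quadratic term, yielding a common factor $\|x^*-x_k\|/(t^*-t_k)$ in both summands and collapsing the bracket to $(1-c_k)/(1+c_k)$. I expect no substantive obstacle here; the only subtlety is the compatibility of signs—$c_k<0$ makes $(1-c_k)/(1+c_k)>1$, so the upgrade of the ratio over the trivial estimate $\|y_k-x_k\|\leq s_k-t_k$ is consistent with the fact that $\|x^*-x_k\|/(t^*-t_k)$ may be strictly less than $1$. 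The main thing to be careful about is verifying $1+c_k>0$ uniformly in $k$ from the cubic-convergence hypothesis; once that is established the rest is algebraic bookkeeping with the two already-proved majorant inequalities.
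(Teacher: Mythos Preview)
Your proposal is correct and follows essentially the same route as the paper: triangle inequality $\|y_k-x_k\|\leq\|x^*-y_k\|+\|x^*-x_k\|$, then the estimates \eqref{ineq:norm_x*-yk}, \eqref{ineq:norm_x*-xk}, \eqref{ineq:t*-sk}, and \eqref{ineq:sk-tk}. The only cosmetic difference is ordering---the paper bounds the numerator first and then divides by $s_k-t_k$, whereas you divide first and bound each fraction separately---and your explicit verification that $1+c_k>0$ is a nice touch the paper leaves implicit.
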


\begin{proof}
Since $\|y_k - x_k\| \leq \|x^* - y_k\| + \|x^* - x_k\|$,
it follows from \eqref{ineq:norm_x*-xk} and \eqref{ineq:norm_x*-yk} that
\begin{align*}
\|y_k - x_k\|
    &\leq \frac{t^* - s_k}{(t^* - t_k)^2} \|x^* - x_k\|^2 + \|x^* - x_k\| \\
    &\leq \left(\frac{t^* - s_k}{t^* - t_k} + 1\right) \|x^* - x_k\|.
\end{align*}
Then, by \eqref{ineq:t*-sk}, we can get further that
$$
\|y_k - x_k\| \leq \left(1 - \frac{h''(t^*)}{2 h'(t^*)}(t^* - t_k)\right) \|x^* - x_k\|.
$$
Thus, we conclude from \eqref{ineq:sk-tk} that
\begin{align*}
\frac{\|y_k - x_k\|}{s_k - t_k}
    &\leq \frac{1 - \frac{h''(t^*)}{2 h'(t^*)}(t^* - t_k)}{s_k - t_k} \|x^* - x_k\| \\
    &\leq \frac{1 - \frac{h''(t^*)}{2 h'(t^*)} (t^*-t_k)}{1 + \frac{h''(t^*)}{2 h'(t^*)} (t^*-t_k)}
        \cdot \frac{\|x^* - x_k\|}{t^* - t_k},
\end{align*}
which yields the desired result.
\end{proof}

\subsection{Proof for Theorem \ref{th:SemilocalConv}}

Based on the technical lemmas given in previous subsection,
we are now ready to prove the semilocal convergence result given in Theorem \ref{th:SemilocalConv}
for the two-step Newton method \eqref{it:TwoStepNM}.

\begin{proof}[Proof of Theorem $\ref{th:SemilocalConv}$]
Thanks to Lemma \ref{lem:AuxiliaryLemma1_Semilocal}, we conclude that the sequence $\{x_k\}$ is well defined.
By using Lemma \ref{lem:AuxiliaryLemma1_Semilocal} (v) and Lemma \ref{lem:TwoStepNM_Scalar_Conv},
one has that $\|x_k - x_0\| \leq t_k < t^*$ for any $k \geq 0$, which means that
$\{x_k\}$ is contained in $\ball(x_0, t^*)$. Moreover, it follows from
Lemma \ref{lem:AuxiliaryLemma2_Semilocal} that $\{x_k\}$ converges to $x^*$, a solution of
\eqref{eq:F(x)=0} in $\overline{\ball(x_0, t^*)}$.
Next, we will verify the superquadratic and cubic convergence of the iterate. To do this, we
apply standard analytical techniques to derive that
\begin{align*}
\lefteqn{x^* - x_{k+1} = x^* - y_k + F'(x_k)^{-1}F(y_k)} \\
    &= - F'(x_k)^{-1}[F(x^*) - F(y_k) - F'(y_k)(x^* - y_k) + (F'(y_k) - F'(x_k))(x^* - y_k)] \\
    &= - F'(x_k)^{-1}\left[\int_0^1 \left(F'(y_k^\tau) - F'(y_k)\right)(x^* - y_k) \dif{\tau}
        + (F'(y_k) - F'(x_k))(x^* - y_k)\right],
\end{align*}
where $y_k^\tau := y_k + \tau(x^* - y_k)$. By \eqref{ineq:norm_F'(x)-1F'(x0)}
and the $L$-average Lipschitz condition \eqref{cond:LAverageLipCond}, we have
\begin{align*}
\|x^* - x_{k+1}\|
    &\leq - \frac{1}{h'(t_k)}\left[\int_0^1 \left(\int_{\|y_k - x_0\|}^{\|y_k-x_0\|+\tau\|x^*-y_k\|}
        L(u) \dif{u}\right)\|x^* - y_k\|\dif{\tau}\right. \\
    & \quad + \left.\int_{\|x_k-x_0\|}^{\|x_k-x_0\|+\|y_k-x_k\|} L(u) \dif{u} \cdot \|x^*-y_k\|\right].
\end{align*}
Taking into account $h'$ is increasing and convex in $[0,r_0)$,
combining \eqref{ineq:norm_x*-yk}, \eqref{ineq:norm_yk-x0}, Lemma \ref{lem:ConvexProperty}
and Lemma \ref{lem:AuxiliaryLemma1_Semilocal} (iii), one can deduce that
\begin{align}
\|x^* - x_{k+1}\|
    &\leq - \frac{1}{h'(t_k)}\left[\int_0^1 \frac{h'(s_k+\tau(t^*-s_k)) - h'(s_k)}{t^* - s_k} \dif{\tau}
        \cdot \|x^* - y_k\|^2 \right. \nonumber\\
    &\quad + \left.\frac{h'(s_k) - h'(t_k)}{s_k - t_k} \cdot \|y_k - x_k\| \|x^* - y_k\|\right] \nonumber\\
    &= - \frac{1}{h'(t_k)}\left[\left(h(t^*) - h(s_k) - h'(s_k)(t^*-s_k)\right)\cdot
        \left(\frac{\|x^*-y_k\|}{t^*-s_k}\right)^2 \right. \nonumber\\
    &\quad + \left. \left(h'(s_k) - h'(t_k)\right)(t^*-s_k) \cdot \frac{\|y_k-x_k\|}{s_k-t_k} \frac{\|x^*-y_k\|}{t^*-s_k}\right]. \label{ineq:norm_x*-xk+1_1}
\end{align}
By Lemma \ref{lem:AuxiliaryLemma1_Semilocal} (iii) and \eqref{ineq:norm_x*-yk} again,
the above inequality can be derive further that
\begin{equation}
\label{ineq:norm_x*-xk+1_2}
\|x^* - x_{k+1}\| \leq (t^* - t_{k+1}) \left(\frac{\|x^*-x_k\|}{t^*-t_k}\right)^2.
\end{equation}
Then, it follows from \eqref{ineq:t*-tk+1} that
$$
\frac{\|x^* - x_{k+1}\|}{\|x^*-x_k\|^2}
    \leq \frac{t^* - t_{k+1}}{(t^* - t_k)^2}
    \leq \frac{1}{2}\left(\frac{h''(t^*)}{h'(t^*)}\right)^2(t^* - t_k).
$$
Letting $k \to \infty$ in the above inequalities, by noting that $\{t_k\}$ converges to $t^*$, we have
$$
\lim_{k \to \infty} \frac{\|x^* - x_{k+1}\|}{\|x^*-x_k\|^2}  = 0,
$$
which means that $\{x_k\}$ converges Q-superquadratically to $x^*$
(See Definition \ref{def:Q_Order_Conv} for the definition).
In addition,
if the condition \eqref{cond:CubicConv_Cond} is also satisfied, then
the estimates \eqref{ineq:norm_x*-yk}, \eqref{ineq:norm_yk-xk_x*-xk} and \eqref{ineq:t*-tk+1} are
applicable to conclude from \eqref{ineq:norm_x*-xk+1_1} further that
\begin{align*}
\|x^* - x_{k+1}\|
    &\leq - \frac{1}{h'(t_k)}\left[(h(t^*) - h(s_k) - h'(s_k)(t^*-s_k)) \right. \\
    &\quad + \left. (h'(s_k) - h'(t_k))(t^*-s_k)\right]
    \cdot \frac{1 - \frac{h''(t^*)}{2 h'(t^*)} (t^*-t_k)}{1 + \frac{h''(t^*)}{2 h'(t^*)} (t^*-t_k)}
    \cdot \left(\frac{\|x^*-x_k\|}{t^*-t_k}\right)^3 \\
    &= (t^* - t_{k+1}) \cdot \frac{1 - \frac{h''(t^*)}{2 h'(t^*)} (t^*-t_k)}{1 + \frac{h''(t^*)}{2 h'(t^*)} (t^*-t_k)} \left(\frac{\|x^*-x_k\|}{t^*-t_k}\right)^3 \\
    &\leq \frac{1}{2}\left(\frac{h''(t^*)}{h'(t^*)}\right)^2
        \cdot \frac{1 - \frac{t^* h''(t^*)}{2 h'(t^*)}}{1 + \frac{t^* h''(t^*)}{2 h'(t^*)}}
        \cdot \|x^* - x_k\|^3.
\end{align*}
This shows the estimate \eqref{estimate:norm_x*-xk+1_LAverageLipCond} in Theorem \ref{th:SemilocalConv}
and so the order of convergence for the iterate is Q-cubic.

Finally, we show the uniqueness of the solution. We first to show the solution $x^*$ of \eqref{eq:F(x)=0}
is unique on $\overline{\ball(x_0, t^*)}$. Assume that there exists another solution
$x^{**}$ on $\overline{\ball(x_0, t^*)}$. Then $\|x^{**} - x_0\| \leq t^*$. Now we prove by induction that
\begin{equation}
\label{ineq:norm_x**-xk}
\|x^{**} - x_k\| \leq t^* - t_k, \quad k = 0,1,2,\ldots.
\end{equation}
It is clear that the case $k = 0$ holds because of $t_0 = 0$. Assume that the above inequality holds for some
$k \geq 0$. As the same process on the estimate $\|x^* - y_k\|$ in \eqref{ineq:norm_x*-yk}, we get
$$
\|x^{**} - y_k\| \leq (t^* - s_k)\left(\frac{\|x^{**} - x_k\| }{t^* - t_k}\right)^2.
$$
In addition, following the same process on the estimate $\|x^* - x_{k+1}\|$ in \eqref{ineq:norm_x*-xk+1_2},
we have
$$
\|x^{**} - x_{k+1}\| \leq (t^* - t_{k+1})\cdot \left(\frac{\|x^{**} - x_k\|}{t^* - t_k}\right)^2.
$$
Then, by applying the inductive hypothesis \eqref{ineq:norm_x**-xk} to the above inequality, one has that
\eqref{ineq:norm_x**-xk} also holds for the case $k+1$. Since $\{x_k\}$ converges to $x^*$
and $\{t_k\}$ converges to $t^*$, we conclude from \eqref{ineq:norm_x**-xk} that $x^{**} = x^*$.
Therefore, $x^*$ is the unique zero of \eqref{eq:F(x)=0} on $\overline{\ball(x_0, t^*)}$.
It remains to prove that $F$ does not have zeros in $\ball(x_0, r)\backslash\overline{\ball(x_0, t^*)}$.
For proving this fact by contradiction, assume that $F$ does have a zero there, that is, there exists
$x^{**} \in \DS \subset \XS$ such that $t^* < \|x^{**} - x_0\| < r$ and $F(x^{**}) = 0$.
we will show that the preceding assumptions do not hold. Firstly, we have the following observation
\begin{equation}
\label{eq:F(x**)}
F(x^{**}) = F(x_0) + F'(x_0)(x^{**} - x_0) + \int_0^1 [F'(x_0^\tau) - F'(x_0)](x^{**} - x_0) \dif{\tau},
\end{equation}
where $x_0^\tau := x_0+\tau(x^{**}-x_0)$. Note that,
\begin{align*}
\|F'(x_0)^{-1}[F(x_0) + F'(x_0)(x^{**} - x_0)]\|
    &\geq \|x^{**} - x_0\| - \|F'(x_0)^{-1}F(x_0)\| \\
    &= \|x^{**} - x_0\| - h(0).
\end{align*}
In addition, we use the $L$-average Lipschitz condition \eqref{cond:LAverageLipCond} to yield
\begin{align*}
\lefteqn{\left\|F'(x_0)^{-1}\int_0^1 [F'(x_0^\tau) - F'(x_0)](x^{**} - x_0) \dif{\tau}\right\|} \\
    &\leq \int_0^1 \left(\int_0^{\tau\|x^{**}-x_0\|} L(u) \dif{u}\right) \|x^{**}-x_0\| \dif{u}  \\
    &= \int_0^1[h'(\tau\|x^{**}-x_0\|) - h'(0)]\|x^{**}-x_0\| \dif{\tau}  \\
    &= h(\|x^{**}-x_0\|) - h(0) - h'(0) \cdot \|x^{**}-x_0\|.
\end{align*}
In view of $F(x^{**}) = 0$ and $h'(0) = -1$, we obtain from \eqref{eq:F(x**)} that
$$
h(\|x^{**}-x_0\|) - h(0) - h'(0) \cdot \|x^{**}-x_0\| \geq \|x^{**} - x_0\| - h(0),
$$
which is equivalent to $h(\|x^{**}-x_0\|) \geq 0$. It follows from Lemma \ref{lem:h(t)_property} that
$h$ is strictly positive in the interval $(\|x^{**}-x_0\|, R)$. Thus, we know $r \leq \|x^{**}-x_0\|$,
which is a contradiction to the preceding assumptions. Therefore, $F$ does not have zeros in
$\ball(x_0, r)\backslash\overline{\ball(x_0, t^*)}$ and $x^*$ is the unique zero of equation \eqref{eq:F(x)=0}
in $\ball(x_0, r)$. The proof is complete.
\end{proof}

\section{Application to algebraic Riccati equation}

In this section, we apply the two step Newton method \eqref{it:TwoStepNM} to solve
a special nonlinear vector equation which is obtained by a nonsymmetric algebraic Riccati equation (NSARE)
arising from transport theory.
Throughout this section, we use the following definitions and notations.
We call matrix $A = (a_{ij})_{m\times n} \in \RS^{m\times n}$ a positive matrix (nonnegative matrix)
if $a_{ij} > 0 \, (a_{ij} \geq 0)$ hold for all $i = 1, 2, \ldots, m, j = 1,2\ldots, n$.
If all the components of a vector are positive (negative), we call it a positive (negative) vector.
For a given a vector $\vec{a}$, we denote by $\diag(\vec{a})$ the
diagonal matrix whose diagonal elements are the components of $\vec{a}$.
We denote the vectors of all zeros and ones with proper dimension by $\vec{0}$ and $\vec{e}$, respectively.
The norm of a vector or a matrix used in this section is $\infty-$norm.

The form of the NSARE is as follows:
\begin{equation}
\label{eq:NSARE}
XCX - XD - AX + B = 0,
\end{equation}
where $A, B, C, D \in \RS^{n \times n}$ are known matrices given by
\begin{equation}
\label{mat:ABCD}
A = \Delta - \vec{e} \tran{\vec{q}}, \quad
B = \vec{e}\tran{\vec{e}}, \quad
C = \vec{q}\tran{\vec{q}}, \quad
D = \Gamma - \vec{q}\tran{\vec{e}},
\end{equation}
with
\begin{equation*}
\left\{
\begin{aligned}
\Delta &= \diag(\delta_1, \delta_2, \ldots, \delta_n)
    \text{ with }
    \delta_i = \frac{1}{c\omega_i(1 + \alpha)} > 0, \\
\Gamma &= \diag(\gamma_1, \gamma_2, \ldots, \gamma_n)
    \text{ with }
    \gamma_i = \frac{1}{c\omega_i(1 - \alpha)} > 0, \\
\vec{q} &= \tran{(q_1, q_2, \ldots, q_n)}
    \text{ with }
    q_i = \frac{c_i}{2\omega_i} > 0.
\end{aligned}
\right.
\end{equation*}
Here $c \in (0,1]$ and $\alpha \in [0,1)$. Moreover, $\{\omega_i\}_{i=1}^n$ and $\{c_i\}_{i=1}^n$ are the sets of
the Gauss-Legendre nodes and weights, respectively, on the interval
$[0,1]$, and satisfy
$$
0 < \omega_n < \cdots < \omega_2 < \omega_1 < 1
    \mbox{ and }
\sum_{i=1}^n c_i = 1, c_i > 0, i =1, 2, \ldots, n.
$$
The NSARE \eqref{eq:NSARE} has positive solutions (that is, the solution is a positive matrix), but only the minimal positive solution
of it is physically meaningful \cite{Juang1995}.

Lu \cite{Lu2005a} first proved that the solution of (\ref{eq:NSARE}) must have the following form:
$$
X = T \circ (\vec{u} \tran{\vec{v}}) = (\vec{u} \tran{\vec{v}}) \circ T,
$$
where $\circ$ denotes the Hadamard product,
$T = (t_{ij})_{n \times n} = \left(\frac{1}{\delta_i + \gamma_j}\right)_{n\times n}$,
$\vec{u}$ and $\vec{v}$ are vectors satisfying
\begin{equation}
\label{eq:VecEq_uv}
\left\{
\begin{aligned}
\vec{u} &= \vec{u} \circ (P\vec{v}) + \vec{e},\\
\vec{v} &= \vec{v} \circ (P\vec{u}) + \vec{e},
\end{aligned}
\right.
\end{equation}
with
\begin{equation}
\label{mat:P_Ptilde}
P
    = (p_{ij})_{n\times n}
    = \left(\frac{q_j}{\delta_i + \gamma_j}\right)_{n\times n}, \quad \tilde{P}
    = (\tilde{p}_{ij})_{n\times n}
    = \left(\frac{q_j}{\gamma_i + \delta_j}\right)_{n\times n}.
\end{equation}
Define nonlinear operator $\vf: \RS^{2n} \to \RS^{2n}$ by
\begin{equation}
\label{eq:f(u,v)=0}
\vf(\vec{u},\vec{v}) =
\begin{bmatrix} \vec{u} \\ \vec{v} \end{bmatrix} -
\begin{bmatrix} \vec{u} \\ \vec{v} \end{bmatrix} \circ \begin{bmatrix} P\vec{v} \\ \tilde{P}\vec{u} \end{bmatrix} -
\begin{bmatrix} \vec{e} \\ \vec{e} \end{bmatrix}.
\end{equation}
Then, one can rewrite (\ref{eq:VecEq_uv}) as $\vf(\vec{u},\vec{v}) = \vec{0}$.
Hence, the minimal positive solution of NSARE (\ref{eq:NSARE}) can be obtained via computing the
minimal positive solution of the nonlinear vector equation (\ref{eq:f(u,v)=0}). There
have been a lot of studies about the monotone convergence of various iterative methods
for solving the minimal positive solution of (\ref{eq:NSARE}),
one can see \cite{Lu2005a,BaiGL2008,LingX2017} and references therein.

Clearly, $\vec{f}$ is a continuously Fr\'{e}chet differentiable nonlinear operator in $\RS^{2n}$.
The Jacobian matrix of $\vec{f}$ at point
$(\vec{u},\vec{v})$ has the following form:
\begin{equation}
\label{eq:JacMat_f'(u,v)}
\vec{f}'(\vec{u},\vec{v}) = I_{2n} - G(\vec{u},\vec{v}),
\end{equation}
with
\begin{equation}
\label{mat:G(u,v)}
G(\vec{u},\vec{v})
    = \begin{bmatrix}
        G_1(\vec{v}) & H_1(\vec{u}) \\
        H_2(\vec{v}) & G_2(\vec{u})
        \end{bmatrix},
\end{equation}
where $I_{2n}$ is the identity matrix of order $2n$,
\begin{equation*}
\left\{
\begin{aligned}
G_1(\vec{v}) & = \diag(P\vec{v}), \\
G_2(\vec{u}) & = \diag(\tilde{P}\vec{u}), \\
H_1(\vec{u}) & = [\vec{u} \circ \vec{p}_1, \vec{u} \circ \vec{p}_2, \ldots, \vec{u} \circ \vec{p}_n], \\
H_2(\vec{v}) & = [\vec{v} \circ \tilde{\vec{p}}_1, \vec{v} \circ \tilde{\vec{p}}_2, \ldots, \vec{v} \circ \tilde{\vec{p}}_n],
\end{aligned}
\right.
\end{equation*}
$\vec{p}_i$ and $\tilde{\vec{p}}_i$ are the $i$th column of $P$ and $\tilde{P}$ for each $i = 1,2,\ldots,n$, respectively.
Choose initial point $\tran{[\tran{\vec{u}}_0,\tran{\vec{v}}_0]} = \tran{\vec{0}}$.
Then we have $\vec{f}(\vec{u}_0, \vec{v}_0) = - \vec{e}$ and $\vec{f}'(\vec{u}_0, \vec{v}_0) = I_{2n}$.
Thus,
$$
\beta := \|\vf'(\vec{u}_0, \vec{v}_0)^{-1} \vf(\vec{u}_0, \vec{v}_0)\|_{\infty}
    = \|\vec{e}\|_{\infty} = 1.
$$
Moreover, for any $(\vec{u},\vec{v})$ and $(\vec{u}',\vec{v}') \in \RS^{2n}$, it follows from \eqref{eq:JacMat_f'(u,v)} that
\begin{align*}
\|\vf'(\vec{u}_0, \vec{v}_0)^{-1}[\vf'(\vec{u},\vec{v}) - \vf'(\vec{u}',\vec{v}')]\|_{\infty}
    &= \|G(\vec{u},\vec{v}) - G(\vec{u}',\vec{v}')\|_{\infty} \\
    &\leq 2 \max_{1\leq i \leq n} \left\{\sum_{j=1}^n p_{ij}, \sum_{j=1}^n \tilde{p}_{ij}\right\}
        \cdot \left\|\begin{bmatrix} \vec{u} - \vec{u}' \\ \vec{v} - \vec{v}' \end{bmatrix}\right\|_{\infty}.
\end{align*}
In \cite[Lemma 3]{Lu2005a}, Lu derived that $\sum\limits_{j=1}^n p_{ij} < \frac{c(1 - \alpha)}{2}$ and
$\sum\limits_{j=1}^n \tilde{p}_{ij} < \frac{c(1 + \alpha)}{2}$. By making use of these two estimates,
we can obtain that
$$
\|\vf'(\vec{u}_0, \vec{v}_0)^{-1}[\vf'(\vec{u},\vec{v}) - \vf'(\vec{u}',\vec{v}')]\|_{\infty}
    < c(1 + \alpha) \left\|\begin{bmatrix} \vec{u} - \vec{u}' \\ \vec{v} - \vec{v}' \end{bmatrix}\right\|_{\infty}.
$$
That is, the Fr\'{e}chet derivative  of $\vf$ satisfies the Lipschitz condition \eqref{cond:LipCond}
with the Lipschitz constant $L = c(1 + \alpha)$. Therefore, Corollary \ref{cor:Conv_LipCond} is applicable to conclude
that the iterative sequence generated by the two step Newton method \eqref{it:TwoStepNM} for nonlinear
operator $\vf$ defined by \eqref{eq:f(u,v)=0} starting from the zero vector $\vec{0}$ converges Q-superquadratically
to the minimal positive solution if
$$
L\beta = c(1 + \alpha) \leq \frac{1}{2}.
$$
Moreover, the order of convergence is cubic at least if $L\beta = c(1 + \alpha) < \frac{4}{9}$.
In particular, we can obtain that the minimal positive $\vec{w}_* := \tran{[\tran{\vec{u}}_*, \tran{\vec{v}}_*]}$ belongs
to the open ball with center $\vec{0}$ and radius $r$, where
$$
\frac{1 - \sqrt{1-2c(1+\alpha)}}{c(1+\alpha)} \leq r < \frac{1 + \sqrt{1-2c(1+\alpha)}}{c(1+\alpha)}.
$$
That is, it satisfies $0 < \|\vec{w}_*\|_{\infty} \leq r$, which coincides with the one given in \cite[Theorem 4.1]{BaiGL2008}.

We end this section with some numerical experiments illustrating the convergence results.
The algorithm for implementing the two step Newton method is summarized by
Algorithm \ref{al:TwoStepNM} as follows.

\begin{algorithm}[h!]
\caption{Two step Newton method for solving
(\ref{eq:f(u,v)=0})} \label{al:TwoStepNM}
Given $c \in (0, 1]$ and $\alpha \in [0,1)$. Choose initial point
$\tran{[\tran{\vec{u}}_0,\tran{\vec{v}}_0]} = \tran{\vec{0}}$.
Form the matrices $P$ and $\tilde{P}$ by \eqref{mat:P_Ptilde}.
For $k = 0,1,2,\ldots$ until convergence, do:
\newcounter{newlist}
\begin{list}{\arabic{newlist}.}{\usecounter{newlist}
\setlength{\rightmargin}{0em}\setlength{\leftmargin}{2.5em}}
\item[Step 1.]
Form the matrix $G(\vec{u}_k, \vec{v}_k)$ by \eqref{mat:G(u,v)}.
\item[Step 2.]
Compute $\tilde{\vec{v}}_k$ from the system of linear equations below:
\begin{align*}
\lefteqn{[I_n - G_2(\vec{u}_k) - H_2(\vec{v}_k)(I_n - G_1(\vec{v}_k))^{-1}
H_1(\vec{u}_k)]\tilde{\vec{v}}_k}\\
& = H_2(\vec{v}_k) (I_n - G_1(\vec{v}_k))^{-1} [\vec{e} - H_1(\vec{u}_k)\vec{v}_k]  + \vec{e} - H_2(\vec{v}_k)\vec{u}_k.
\end{align*}
\item[Step 3.]
Compute $\tilde{\vec{u}}_k$ from the following formula:
$$
\tilde{\vec{u}}_k = (I_n - G_1(\vec{v}_k))^{-1} [H_1(\vec{u}_k) (\tilde{\vec{v}}_k - \vec{v}_k) + \vec{e}].
$$
\item[Step 4.]
Compute $\vec{v}_{k+1}$ from the system of linear equations below:
\begin{align*}
\lefteqn{[I_n - G_2(\vec{u}_k) - H_2(\vec{v}_k)(I_n - G_1(\vec{v}_k))^{-1} H_1(\vec{u}_k)]\vec{v}_{k+1}}\\
& = H_2(\vec{v}_k) (I_n - G_1(\vec{v}_k))^{-1} [\tilde{\vec{u}}_k \circ P(\tilde{\vec{v}}_k
    - \vec{v}_k) - H_1(\vec{u}_k)\tilde{\vec{v}}_k + \vec{e}] \\
& \quad  + \tilde{\vec{v}}_k \circ \tilde{P}(\tilde{\vec{u}}_k - \vec{u}_k) - H_2(\vec{v}_k)\tilde{\vec{u}}_k + \vec{e}.
\end{align*}
\item[Step 5.]
Compute $\vec{u}_{k+1}$ from the following formula:
\begin{align*}
\vec{u}_{k+1} & = (I_n - G_1(\vec{v}_k))^{-1}
[\tilde{\vec{u}}_k \circ P(\tilde{\vec{v}}_k - \vec{v}_k) + \vec{e} + H_1(\vec{u}_k)(\vec{v}_{k+1} -
\tilde{\vec{v}}_k)].
\end{align*}
\end{list}
\end{algorithm}

As Example 5.2 of \cite{GuoL2000b}, the
constants $c_i$ and $\omega_i$ are given by a numerical quadrature
formula on the interval $[0,1]$ which is obtained by dividing
$[0,1]$ into $n/4$ subintervals of equal length and applying
Gauss-Legendre quadrature with four nodes to each subinterval.
Our numerical experiments were carried out in MATLAB version R2014a running on
a PC with Intel(R) Core(TM) i3-3110M of 2.40 GHz CPU and 12GB memory.
In our implementations, the iterations in the algorithm are stopped when
the following condition is satisfied:
\begin{equation*}
\label{cond:StopCond}
\text{Res} := \max\left\{\frac{\|\vec{u}_{k+1} - \vec{u}_k\|_{\infty}}{\|\vec{u}_{k+1}\|_{\infty}},
    \frac{\|\vec{v}_{k+1} - \vec{v}_k\|_{\infty}}{\|\vec{v}_{k+1}\|_{\infty}}\right\}
\leq \frac{\sqrt{n}}{2}\cdot \mbox{eps},
\end{equation*}
where $n$ the size of matrix $D$ given
by (\ref{mat:ABCD}) and $\mbox{eps} = 2^{-52} \approx 2.2204 \times 10^{-16}$.
The CPU time (in seconds) is computed by using the MATLAB function \textsf{cputime}.
In each test, we run the same program 10 times and choose the average time as the time
spent by the algorithm. Moreover, we use ``iter'' to stand for the number of the iterations needed.

Figure \ref{fig:ConvHistory} depicts the convergence histories
for the problem size $n = 1024, 2048$ and $4096$ with six different pairs of $(\alpha, c)$, respectively,
namely, $(0.5,1/3)$, $(0.5,2/9)$, $(0.5,1/9)$, $(0.25,2/5)$, $(0.25,1/3)$ and $(0.25,1/10)$.
As one can see in the figure, for each case, the speedup is obtained as the value $L\beta = c(1+\alpha)$ decreases.
More convergence results including the number of iterations, the relative residual and the CPU time
for various problem size $n$ are listed
Tables \ref{tab:n1024}, \ref{tab:n2048} and \ref{tab:n4096}, respectively.
Obviously, we see from these tables that it requires less time when the  value $L\beta = c(1+\alpha)$ is taken smaller.

\begin{figure}[h!]
\centering
\subfigure[]{\includegraphics[width=0.46\textwidth]{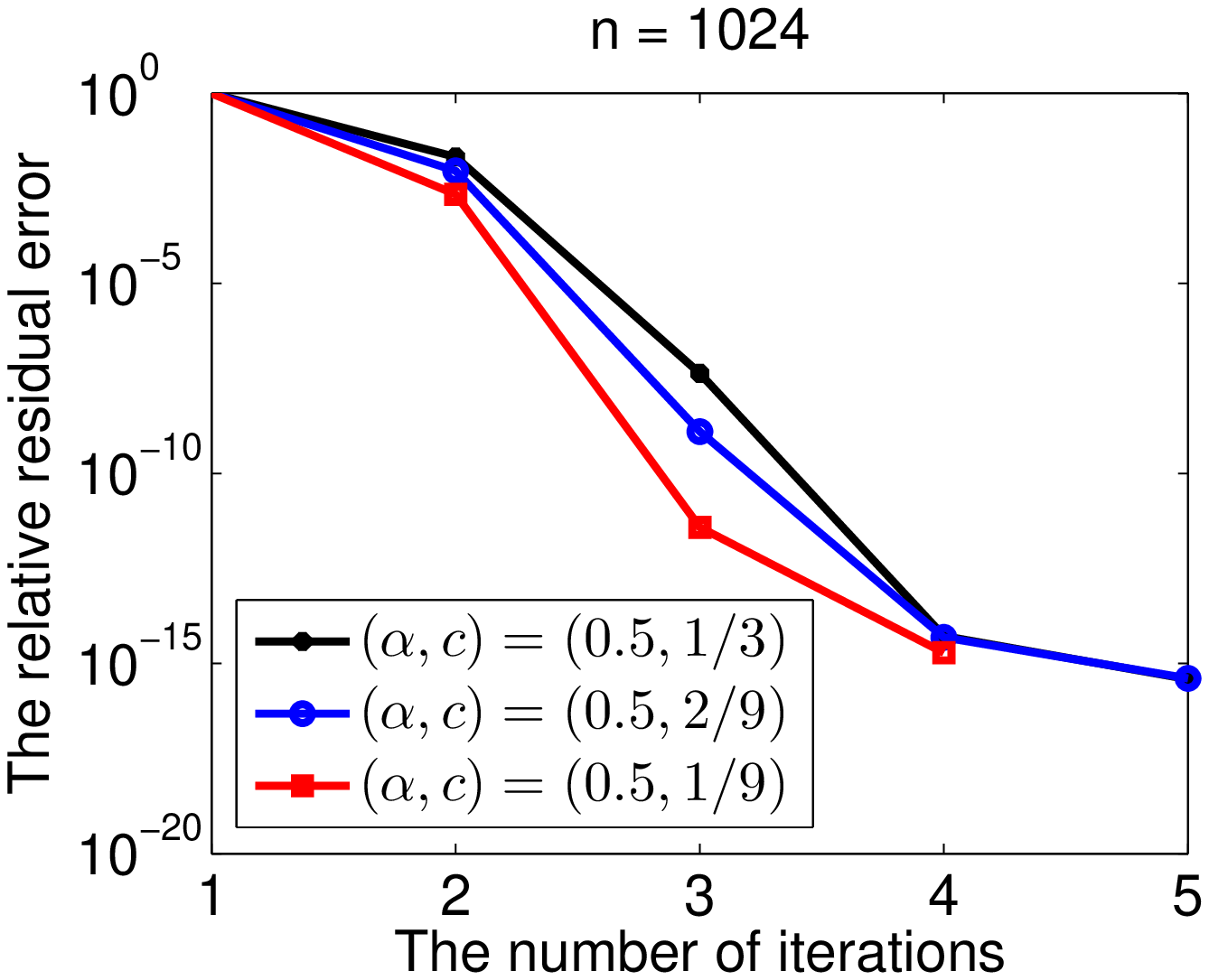}}\qquad
\subfigure[]{\includegraphics[width=0.46\textwidth]{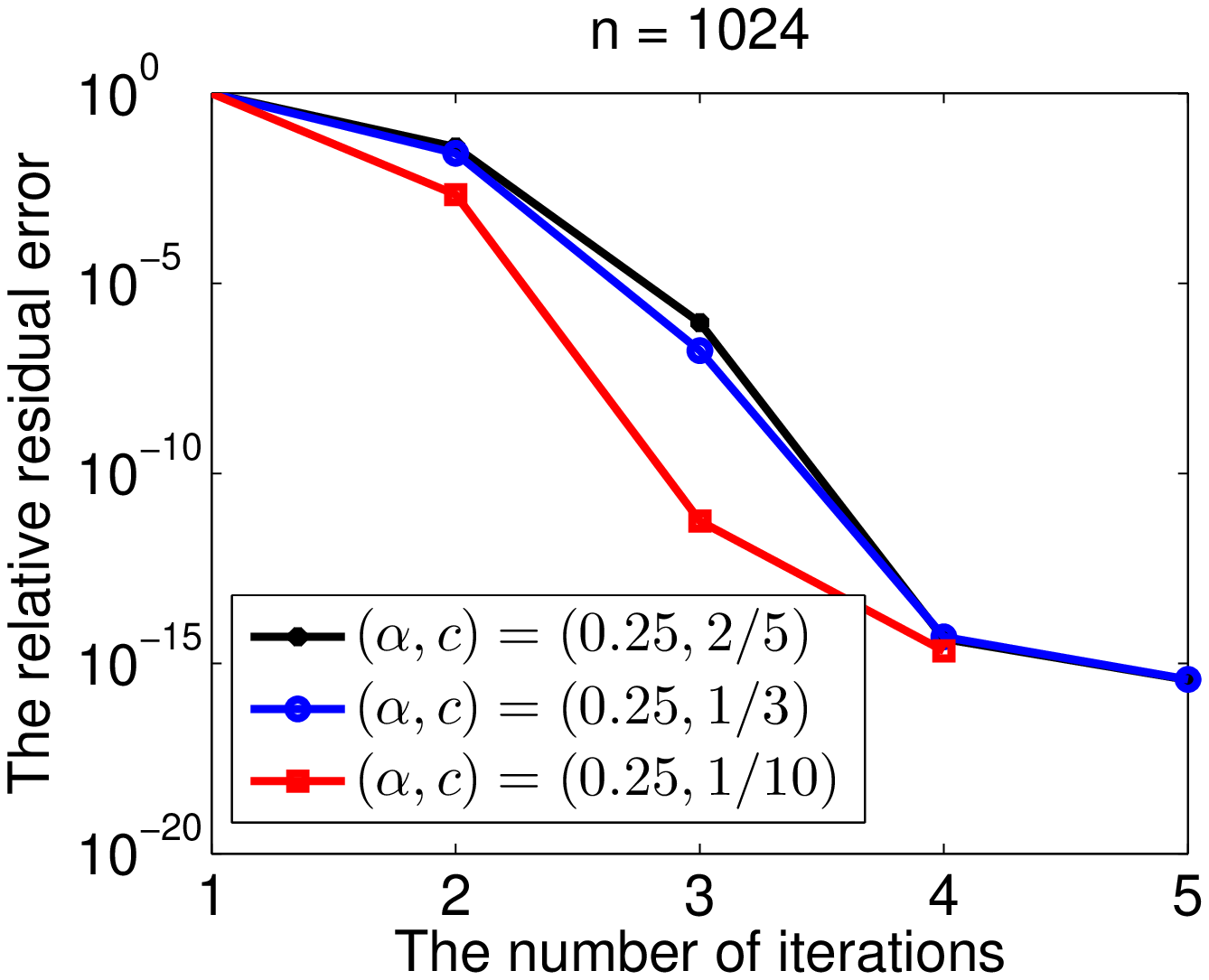}} \\
\subfigure[]{\includegraphics[width=0.46\textwidth]{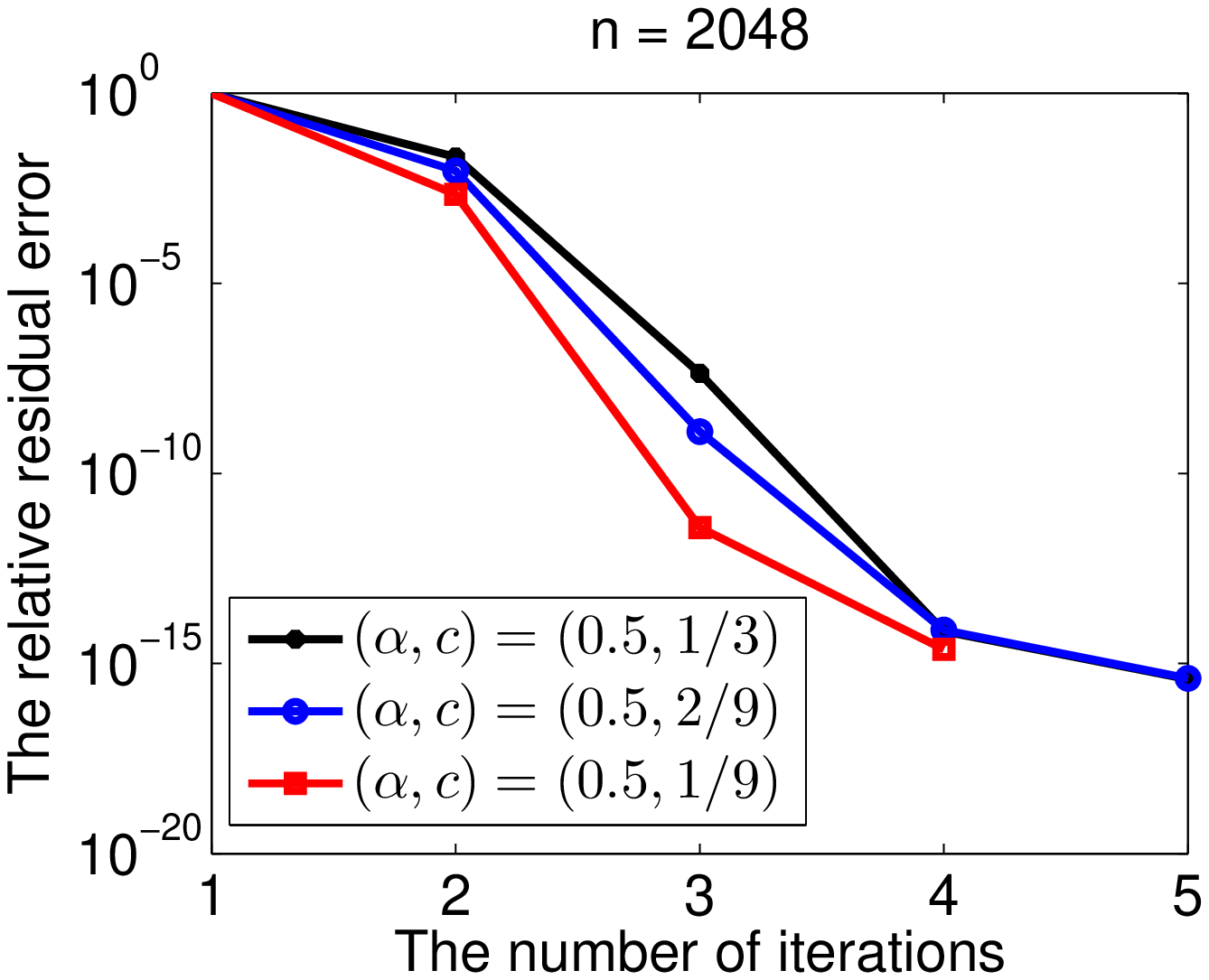}}\qquad
\subfigure[]{\includegraphics[width=0.46\textwidth]{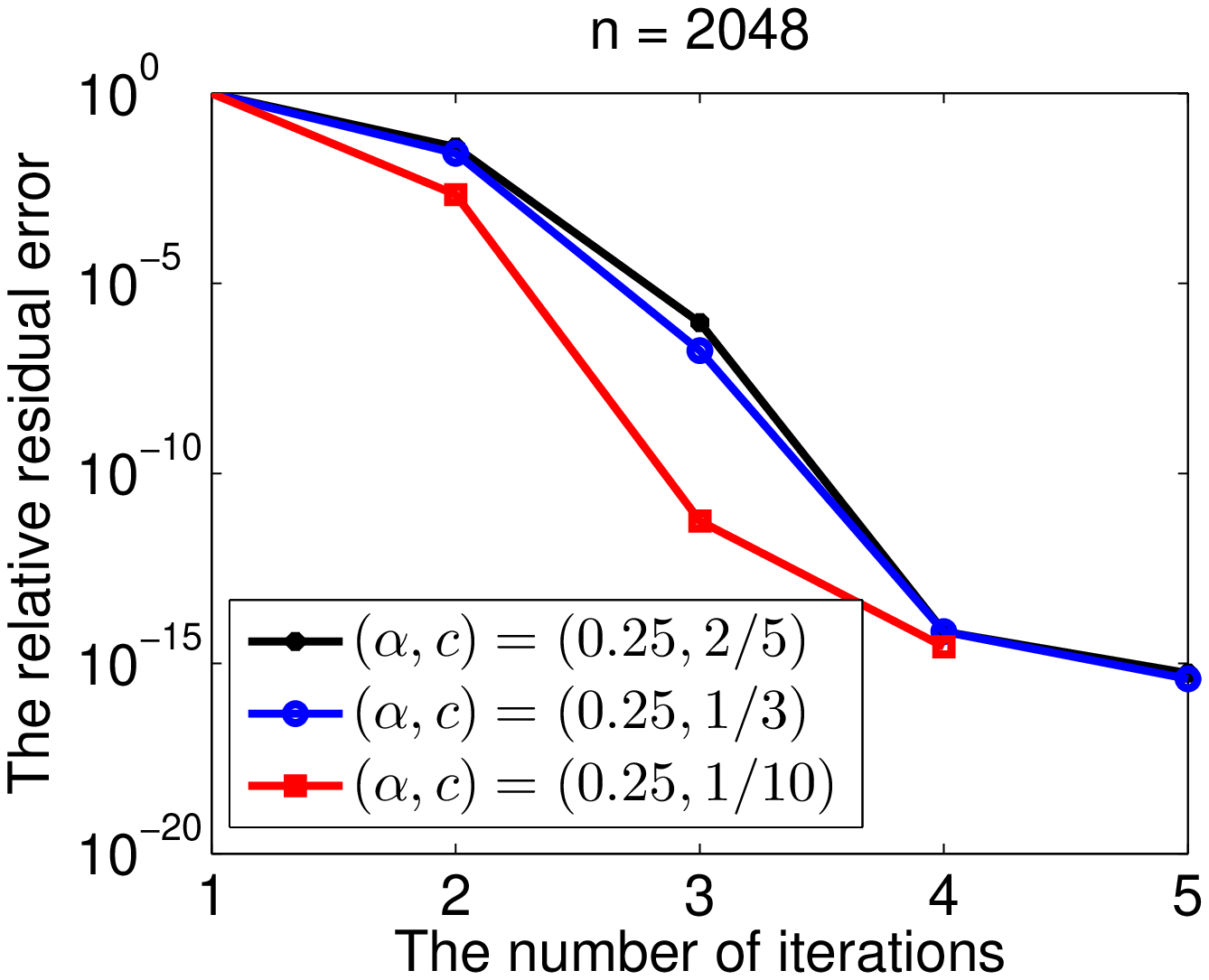}} \\
\subfigure[]{\includegraphics[width=0.46\textwidth]{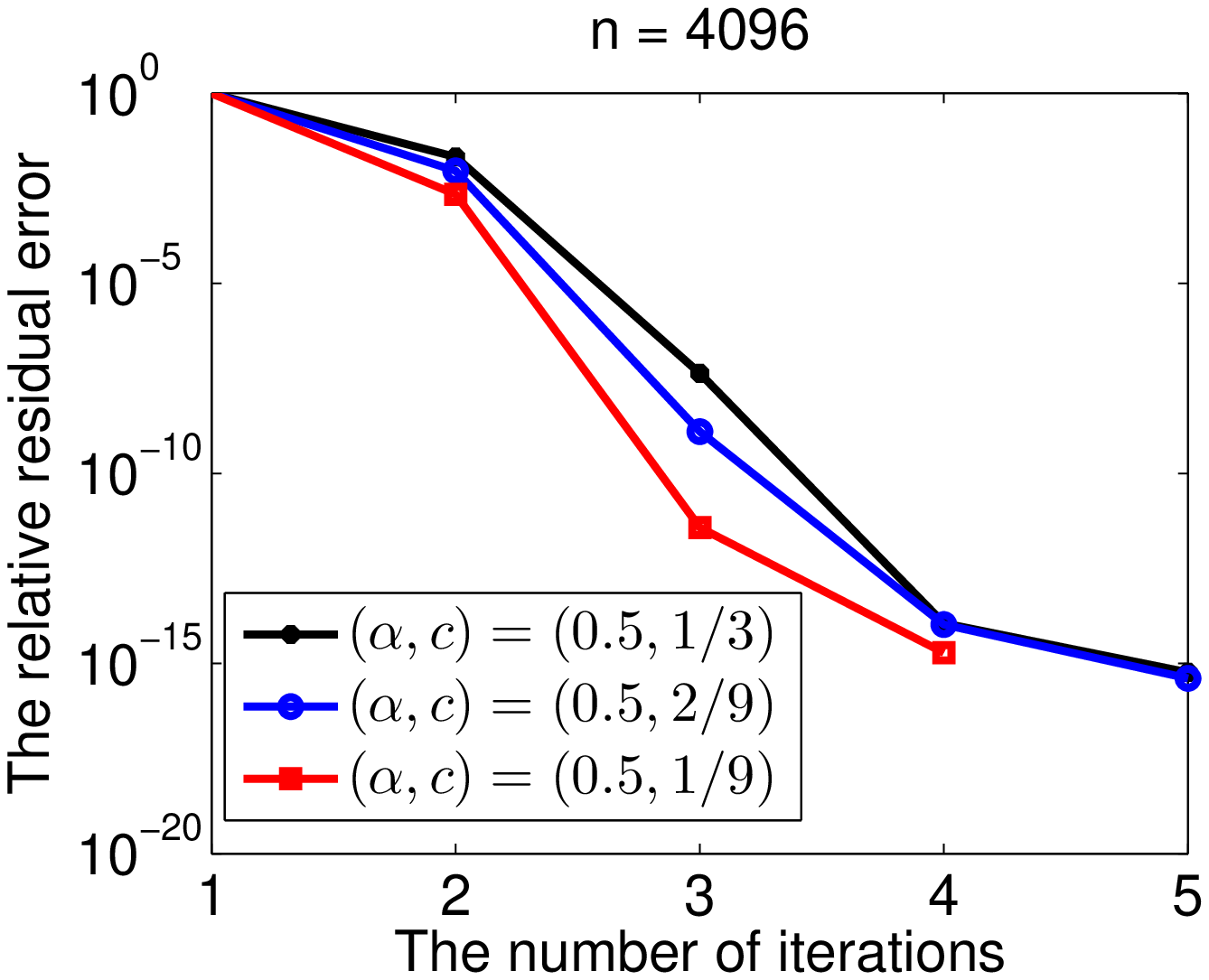}}\qquad
\subfigure[]{\includegraphics[width=0.46\textwidth]{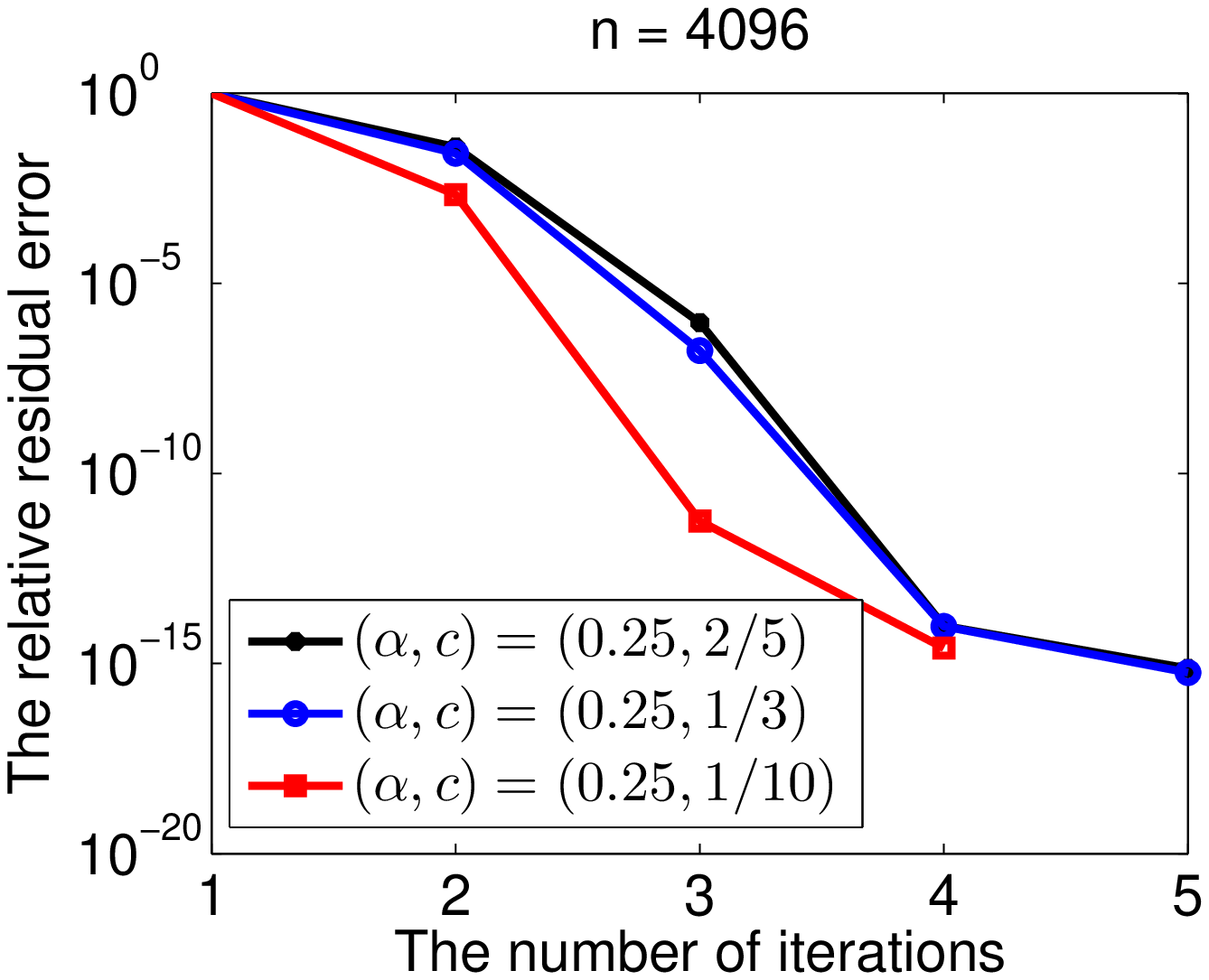}} \\
\caption{The convergence histories for various $(\alpha, c)$ when the problem size $n = 1024, 2048, 4096$, respectively.}\label{fig:ConvHistory}
\end{figure}

\begin{table}[h!]
\caption{The results for the problem size $n = 1024$} \label{tab:n1024}
\begin{center}
\begin{tabular}{ll@{\hspace{0.7cm}}l@{\hspace{0.7cm}}l@{\hspace{0.7cm}}l@{\hspace{0.7cm}}l}
\hline
& $(\alpha, c)$ & $L\beta$ & iter & Res & CPU time (s) \\
\hline
& $(0.5, 1/3)$ & 1/2 & 5 & 3.8969e-16 & 7.6050  \\
& $(0.5, 2/9)$ & 1/3 & 5 & 4.0858e-16 & 7.4600  \\
& $(0.5, 1/9)$ & 1/6 & 4 & 1.9201e-15 & 5.9390  \\
& $(0.25, 2/5)$ & 1/2 & 5 & 3.7210e-16 & 7.4178  \\
& $(0.25, 1/3)$ & 5/12 & 5 & 3.8521e-16 & 7.3523  \\
& $(0.25, 1/10)$ & 1/8 & 4 & 2.1370e-15 & 5.9062  \\
\hline
\end{tabular}
\end{center}
\end{table}

\begin{table}[h!]
\caption{The results for the problem size $n = 2048$} \label{tab:n2048}
\begin{center}
\begin{tabular}{ll@{\hspace{0.7cm}}l@{\hspace{0.7cm}}l@{\hspace{0.7cm}}l@{\hspace{0.7cm}}l}
\hline
& $(\alpha, c)$ & $L\beta$ & iter & Res & CPU time (s) \\
\hline
& $(0.5, 1/3)$ & 1/2 & 5 & 3.8969e-16 & 50.6707  \\
& $(0.5, 2/9)$ & 1/3 & 5 & 4.0858e-16 & 49.3416  \\
& $(0.5, 1/9)$ & 1/6 & 4 & 2.3467e-15 & 40.7350  \\
& $(0.25, 2/5)$ & 1/2 & 5 & 5.5815e-16 & 52.0825  \\
& $(0.25, 1/3)$ & 5/12 & 5 & 3.9646e-16 & 51.6239  \\
& $(0.25, 1/10)$ & 1/8 & 4 & 2.7781e-15 & 40.7989  \\
\hline
\end{tabular}
\end{center}
\end{table}

\begin{table}[h!]
\caption{The results for the problem size $n = 4096$} \label{tab:n4096}
\begin{center}
\begin{tabular}{ll@{\hspace{0.7cm}}l@{\hspace{0.7cm}}l@{\hspace{0.7cm}}l@{\hspace{0.7cm}}l}
\hline
& $(\alpha, c)$ & $L\beta$ & iter & Res & CPU time (s) \\
\hline
& $(0.5, 1/3)$ & 1/2 & 5 & 5.8453e-16 & 351.6434  \\
& $(0.5, 2/9)$ & 1/3 & 5 & 4.0858e-16 & 350.8556  \\
& $(0.5, 1/9)$ & 1/6 & 4 & 1.9200e-15 & 290.0932  \\
& $(0.25, 2/5)$ & 1/2 & 5 & 7.4419e-16 & 363.0767  \\
& $(0.25, 1/3)$ & 5/12 & 5 & 5.7781e-16 & 353.1909  \\
& $(0.25, 1/10)$ & 1/8 & 4 & 2.5644e-15 & 275.7286  \\
\hline
\end{tabular}
\end{center}
\end{table}

In conclusion, the above numerical experiments confirm our convergence results stated in Corollary \ref{cor:Conv_LipCond}
for the two step Newton method \eqref{it:TwoStepNM}.

\section{Conclusions}

In this paper, we have presented the semilocal convergence analysis for two-step Newton method
\eqref{it:TwoStepNM} under the assumption that the first derivative $F'$ satisfies the
$L$-average Lipschitz conditions \eqref{cond:LAverageLipCond} in Banach spaces.
The main results are contained in Theorem \ref{th:SemilocalConv}.
When the unified convergence criteria $0< \beta \leq b$ given by Wang in \cite{Wang1999} is satisfied,
the existence and uniqueness of a solution $x^* \in \ball(x_0, r)$ of \eqref{eq:F(x)=0} are shown,
and the superquadratic convergence of the sequence $\{x_k\}$ is also proved.
Moreover, we proved that the sequence $\{x_k\}$ is Q-cubically convergent if
the condition \eqref{cond:CubicConv_Cond} is satisfied additionally.
Three special cases which include the Kantorovich type conditions,
$\gamma$-conditions and Nesterov-Nemirovskii conditions have been provided.
We also have applied our convergence result to solve the approximation of minimal
positive solution for a nonsymmetric algebraic Riccati equation arising from transport theory.
One goal of our future research is to exploit this general theory to develop more practical and efficient
two-step inexact Newton method for solving IEP that can easily capture practical applications in
large-scale settings.



\end{document}